\numberwithin{equation}{section}
\theoremstyle{plain} 
\newtheorem{theorem}{\indent\sc Theorem}[section]
\newtheorem{lemma}[theorem]{\indent\sc Lemma}
\newtheorem{corollary}[theorem]{\indent\sc Corollary}
\theoremstyle{definition} 
\newtheorem{definition}[theorem]{\indent\sc Definition}
\newtheorem{remark}[theorem]{\indent\sc Remark}
\newtheorem{example}[theorem]{\indent\sc Example}
\newtheorem{conjecture}[theorem]{\indent\sc Conjecture}
\newcommand{\CC}{{\mathbb C}}
\newcommand{\RR}{{\mathbb R}}
\newcommand{\QQ}{{\mathbb Q}}
\newcommand{\ZZ}{{\mathbb Z}}
\newcommand{\PP}{{\mathbb P}}
\newcommand{\FFF}{{\mathcal F}}
\newcommand{\OO}{{\mathcal O}}
\newcommand{\ep}{\varepsilon}
\newcommand{\too}{\longrightarrow}
\newcommand{\oot}{\longleftarrow}
\newcommand{\ths}{ \theta\mbox{-stable}}
\newcommand{\modulo}[2]{{\ensuremath{\langle{#1}\rangle_{#2}}}}
\newcommand{\hd}{{\ensuremath{\mathsf{h}}}}
\newcommand{\tl}{{\ensuremath{\mathsf{t}}}}
 \DeclareMathOperator{\Hom}{Hom}
\DeclareMathOperator{\SL}{SL} \DeclareMathOperator{\GL}{GL}
\DeclareMathOperator{\PGL}{PGL} 
\DeclareMathOperator{\Spec}{Spec} 
 \DeclareMathOperator{\diag}{diag}
\DeclareMathOperator{\Rep}{Rep} \DeclareMathOperator{\Wt}{Wt}
\DeclareMathOperator{\Proj}{Proj}
\DeclareMathOperator{\divisor}{div}
\begin{document}

\title[Danilov resolution and representations of McKay quiver]{Danilov resolution and representations of McKay quiver}
\author[Oskar K\k{e}dzierski]{Oskar K\k{e}dzierski}

\address{Institute of Mathematics\endgraf
Warsaw University\endgraf
 ul. Banacha 2\endgraf
 02-097 Warszawa\endgraf
  Poland}

\email{oskar@mimuw.edu.pl} \subjclass[2010]{Primary 14E16; Secondary
16G20,14L24} \keywords{McKay correspondence; resolutions of terminal
quotient singularities; Danilov resolution; moduli of quiver representations}

\thanks{Research supported by a grant of Polish MNiSzW (N N201 2653 33).}

\date{\today}

\maketitle

\begin{abstract}
We construct a family of McKay quiver representations on the Danilov resolution of the
$\frac{1}{r}(1,a,r-a)$ singularity. This allows us to show that the resolution is the normalization
of the coherent component of the fine moduli space of $\theta$-stable McKay quiver representations for a suitable stability
condition $\theta.$ We describe explicitly the corresponding union of chambers of stability conditions for any coprime numbers $r,a.$
\end{abstract}


\section{Introduction}

In~\cite{King} King introduced the notion of stability of a quiver representation and via GIT constructed fine moduli of stable representations.
The well-known example of King's moduli is the $G$-Hilbert scheme, i.e. scheme parameterizing all $G$-invariant $0$-dimensional subschemes of $\CC^n$ of length equal to the order of $G$ given for any finite group $G\subset\GL(n,\CC)$. It is the moduli of representations of the McKay quiver, defined by the inclusion of $G$
in the general linear group and representation theory of $G$. The $G$-Hilbert scheme was introduced by Ito and Nakamura in~\cite{IN99:simple} where they proved that it is the minimal (crepant) resolution of $\CC^2/G$ for finite group $G\subset\SL(2,\CC)$ and the relation of the $G$-Hilbert scheme
with the moduli of representations of the McKay quiver was observed by Ito and Nakajima in~\cite{IN:Topology}. The result from~\cite{IN99:simple} was extended by
Bridgeland, King and Reid~\cite{BKR} by showing that the $G$-Hilbert scheme is a crepant resolution of the singularity $\CC^3 /G$ for finite group $G\subset\SL(3,\CC)$. In particular the moduli is smooth and irreducible. Moreover, if $G$ is abelian it turns out that by varying the stability parameter one can get all projective crepant resolutions of $\CC^3/G$, cf.~\cite{CI}. The above results suggest that it may be possible to interpret some other resolutions of quotient singularities as moduli of the McKay quiver representations.

In the following paper we accomplish this task for the Danilov resolution of the $3$-dimensional cyclic terminal quotient singularity. The proof relies on Logvinenko's classification of all natural families (called {\it gnat-families}) of the McKay quiver representations on some fixed resolution $Y$ of $\CC^n/G$ for any finite abelian group $G\subset\GL(n,\CC)$ and his characterization of such families as satisfying {\it the reductor condition}~\cite{Logvinenko:RIMS}. The most obvious candidate to be isomorphic to the resolution is the coherent component, defined by Craw, Maclagan and Thomas as the irreducible component of stable moduli of the McKay quiver representations containing points parameterizing free orbits. In~\cite{CrawMaclaganI} they prove that for a finite, abelian group $G$
it is a possibly non-normal toric variety which admits projective birational morphism to $\CC^n/G$.

Let $G=\langle\diag(\ep,\ep^a,\ep^{r-a})\rangle,$ be a cyclic group of
order $r$ generated by a diagonal matrix, where $\ep=e^{{2\pi
i}/{r}},$ with $a,r$ fixed, coprime natural numbers such that $r>1.$ The
quotient singularity $X=\CC^3 /G$ is the unique $3$-dimensional
cyclic, terminal quotient singularity (cf.~\cite{MorrisonStevens}).
We call this the singularity of type $\frac{1}{r}(1,a,r-a).$
In his proof of weak factorization theorem for toric threefolds, Danilov~\cite{Danilov}
introduced a recursively defined resolution of the singularity of type
$\frac{1}{r}(1,a,r-a)$; this was subsequently named {\it the Danilov resolution} by Reid~\cite{Reid:YPG}.

For $a=\pm 1$ K\k{e}dzierski~\cite{Kedz:GHilb} proved that the Danilov resolution is isomorphic to the component of the $G$-Hilbert scheme
that contains the free $G$-orbits. This cannot hold for $a\neq \pm1$ as the $G$-Hilbert scheme is singular. Our main result (see Theorem~\ref{t:main theorem}) establishes that for $a\neq \pm 1$ the Danilov resolution is isomorphic to the normalization of the coherent component of the moduli space of representations of the McKay quiver for a suitably chosen stability parameter. In fact, we describe precisely the appropriate union of GIT chambers of stability conditions. Finally, we conjecture that the coherent component for such stability conditions is normal.

The main idea is to define {\it a priori} a family of McKay quiver
representations on the Danilov resolution and use the universal property of the moduli space.
To show that the resulting map is injective we prove that any two
representations in the given family are non-isomorphic. This is done by exploiting the recursive nature of the resolution.
On the level of representations, the recursive step can be seen by grouping
the vertices of the McKay quiver in, so-called $L-$bricks and $R$-bricks. Those bricks can be seen as vertices of smaller McKay quivers.

The paper is organized as follows. Section~\ref{section:recursive def of Danilov} recalls definition
of the Danilov resolution. In Section~\ref{s:divisors X,Y,Z} we define effective divisors $X_i,Y_i,Z_i$ and $\QQ$-divisors $R_i$ which
will be used in constructing a family of McKay quiver representations on the Danilov resolution. The McKay quiver is defined in Section~\ref{section:McKay quiver}. The family of quiver representations is constructed in Section~\ref{section:construction of family} and we check that any two representations in that family are non-isomorphic. Section~\ref{section:stability} recalls elementary facts on stability of quiver representations. In Section~\ref{section:some linear algebra} we determine a cone of stability conditions  for the constructed family. Finally, in Section~\ref{section:main theorem} we prove the main theorem
and compute explicitly the union of chambers of stability conditions in the $\frac{1}{5}(1,2,3)$ case.

The author would like to thank Miles Reid for introducing him into this subject and to thank Anonymous Referee for his/her careful reading of the paper and for a great deal of effort put into its improvement. The author is grateful to Alaister Craw for helpful remarks and his help in checking the English grammar.

\section{Notation}\label{section:basic def}

Let $G(r,a)$ be a finite, cyclic subgroup of $\GL(3,\CC)$ generated by a
diagonal matrix $\diag(\ep,\ep^a,\linebreak[1]\ep^{r-a}),$ where
$\ep=e^{{2\pi i}/{r}}$ and $0<a<r$ are coprime numbers. Let $N_0=\ZZ e_1
\oplus \ZZ e_2 \oplus \ZZ e_3$
be a free $\ZZ$-module with basis $e_1,e_2,e_3.$ Denote by
$M_0=\Hom_\ZZ (N_0,\ZZ)=\ZZ e_1^* \oplus \ZZ e_2^* \oplus \ZZ
e_3^*$ the lattice dual to $N_0,$ with $e_i^*(e_j)=\delta_{ij}.$ Define $N(r,a)=N_0+\ZZ\cdot
{1/r}(e_1+ae_2+(r-a)e_3)$
 and $M(r,a)=\Hom_\ZZ (N,\ZZ).$
The lattice $M(r,a)$ can be identified with a sublattice of $M_0,$ consisting of exponents of the $G$-invariant
Laurent monomials.  For any points $p_1,\ldots,p_n$ in the lattice $N(r,a)$ we denote by $\langle p_1,\ldots,p_n\rangle$
the cone spanned by these points. For  a rational polyhedral cone $\sigma$ in $N(r,a)\otimes\RR$ by $U_\sigma$ we mean
the toric chart $\Spec(\CC[\sigma^{\vee}\cap M(r,a)]).$ By $\modulo{s}{t}$ we mean the least non-negative integer $u$ such that
$t$ divides $s-u.$ Sometimes we just write $\modulo{s}{},$ when $t$ is obvious, moreover we write $G, M, N$ instead of $G(r,a), M(r,a)$ and $N(r,a)$ when no confusion arises. All indices and all operations on vertices of the McKay quiver are meant modulo $r.$ By $T$ we mean the torus $\Spec\CC[M].$
The vector $a_1e_1 + a_2 e_2 + a_3 e_3$ will be denoted $(a_1,a_2,a_3).$

\section{Recursive definition of the Danilov
resolution}\label{section:recursive def of Danilov}

Let $r$ and $a$ be coprime, natural numbers, such that $a<r.$ We
recall definition of the Danilov resolution of the singularity $\frac{1}{r}(1,a,r-a)$ (cf.~\cite[p.~381]{Reid:YPG}).
Let $\Delta(r,a)=\langle e_1,e_2,e_3\rangle$
be the positive octant in $N(r,a)\otimes \RR.$ There exists ring
isomorphism of $\CC[\Delta(r,a)^{\vee}\cap M(r,a)]$ with the ring of $G(r,a)$-invariant regular functions on $\CC^3,$
therefore the quotient singularity $X(r,a) = \CC^3/G(r,a)$ is a toric variety given by the cone $\Delta(r,a)$  in the lattice $N(r,a).$

Let $b$ denote the inverse of $a$ modulo $r.$
Set \[p_i=\frac{1}{r}(\modulo{-ib}{r},r-i,i)\quad\text{for\ } i=0,\ldots,r.\]

Note that by definition $p_{r-a}=1/r(1,a,r-a)$ and $p_0=e_2,\ p_r=e_3.$ The following well-known lemma implies that the toric varieties associated to cones $\langle e_1,e_2,p_{r-a}\rangle, \langle e_1,e_3,p_{r-a}\rangle$
are isomorphic to the quotients of type ${1/(r-a)}(1,\modulo{r}{r-a},\modulo{-r}{r-a})$ and
 ${1/a}(1,\modulo{-r}{a},\modulo{r}{a})$ respectively.

\begin{lemma}
There exist $\ZZ$-linear isomorphisms
\[L(r,a):N(r-a,\modulo{r}{r-a})\too N(r,a),\]
\[R(r,a):N(a,\modulo{-r}{a})\too N(r,a),\]
such that
$L(r,a)(\Delta(r-a,\modulo{r}{r-a}))=\langle e_1,e_2,p_{r-a}\rangle,$
$R(r,a)(\Delta(a,\modulo{-r}{a}))=\langle e_1,e_3,p_{r-a}\rangle,$
and  $L(r,a)(e_1)= R(r,a)(e_1)=e_1,$ $ L(r,a) (e_3)=R(r,a)(e_2)=p_{r-a}.$
\end{lemma}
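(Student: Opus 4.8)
The plan is to construct the two linear isomorphisms explicitly on the level of lattice bases and then verify the three required properties by direct computation. First I would record the generators of the three lattices in question: $N(r,a)$ is generated over $N_0$ by $\frac{1}{r}(1,a,r-a)$; $N(r-a,\modulo{r}{r-a})$ is generated over its standard $N_0$ by $\frac{1}{r-a}(1,\modulo{r}{r-a},(r-a)-\modulo{r}{r-a})$; and similarly for $N(a,\modulo{-r}{a})$. Since $p_{r-a}=\frac1r(1,a,r-a)$ is a primitive lattice point of $N(r,a)$ (its last coordinate $r-a$ and $r$ together with $a$ ensure primitivity, as $\gcd(a,r)=1$), the sublattice it generates together with $e_1$ and one of $e_2,e_3$ will be the image lattice, and I would check that $\langle e_1,e_2,p_{r-a}\rangle\cap N(r,a)$ (resp.\ with $e_3$) has the claimed index structure matching $\frac{1}{r-a}$ (resp.\ $\frac1a$).

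The key steps, in order, are: (1) define $L(r,a)$ on the basis $e_1,e_2,e_3$ of $N_0$ for the source lattice $N(r-a,\modulo{r}{r-a})$ by sending $e_1\mapsto e_1$, $e_3\mapsto p_{r-a}$, and choosing the image of $e_2$ so that the image of the generator $\frac{1}{r-a}(1,\modulo{r}{r-a},\ast)$ lands in $N(r,a)$ — this amounts to solving a single linear congruence modulo $r-a$, which has a solution precisely because $\gcd(\modulo{r}{r-a},r-a)=1$; (2) check that the matrix of $L(r,a)$ in the two standard bases (augmented by the fractional generators) is unimodular, so $L(r,a)$ is a lattice isomorphism and not merely an injection; (3) verify $L(r,a)(\Delta(r-a,\modulo{r}{r-a}))=\langle e_1,e_2,p_{r-a}\rangle$, which is immediate once the images of $e_1,e_2,e_3$ are pinned down; (4) repeat verbatim with the roles of $e_2$ and $e_3$ swapped and $r-a$ replaced by $a$, using $\gcd(\modulo{-r}{a},a)=1$, to obtain $R(r,a)$. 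The normalization conditions $L(r,a)(e_1)=R(r,a)(e_1)=e_1$ and $L(r,a)(e_3)=R(r,a)(e_2)=p_{r-a}$ are built into the construction in step (1).

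The main obstacle I anticipate is step (2): pinning down the image of the "middle" basis vector ($e_2$ for $L$, $e_3$ for $R$) so that simultaneously the fractional generator of the source lattice maps into $N(r,a)$ \emph{and} the resulting map is unimodular. Concretely, writing $L(r,a)(e_2)=(x_1,x_2,x_3)\in N_0$, the condition that $\frac{1}{r-a}(e_1+\modulo{r}{r-a}\,e_2+(r-a-\modulo{r}{r-a})e_3)$ maps to an element of $N(r,a)=N_0+\ZZ p_{r-a}$ forces $(x_1,x_2,x_3)$ to satisfy a congruence system whose solvability hinges on the arithmetic identities relating $a$, $b$, $r$ and the residues $\modulo{-ib}{r}$; one must choose the free parameter in that solution so as to keep the determinant equal to $\pm1$. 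This is a finite but slightly delicate number-theoretic bookkeeping exercise — essentially the content of the computation on p.~381 of~\cite{Reid:YPG} — and everything else (cone images, the $R$ case by symmetry) follows formally once it is done.
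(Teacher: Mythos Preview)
The paper does not supply a proof of this lemma at all: it introduces the statement as ``the following well--known lemma'' and immediately proceeds to Definition~\ref{definiton:danilov resolution}. Your proposed argument --- defining $L(r,a)$ and $R(r,a)$ explicitly on the integral basis $e_1,e_2,e_3$, extending by linearity, and then verifying that the fractional generator of the source lattice lands in $N(r,a)$ and that the resulting matrix is unimodular --- is exactly the standard way to establish such a statement, and is correct.

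One small simplification you may be overcomplicating: since the cone condition already forces $L(r,a)(e_2)$ to lie on the ray through $e_2$ (the three rays $e_1,e_2,p_{r-a}$ must be the images of $e_1,e_2,e_3$ in some order, and two of the three are already pinned down), you can simply set $L(r,a)(e_2)=e_2$ from the outset rather than leaving it as an unknown $(x_1,x_2,x_3)$ to be solved for. The only genuine check is then that the generator $\frac{1}{r-a}(1,\modulo{r}{r-a},(r-a)-\modulo{r}{r-a})$ maps into $N(r,a)$, which is a single congruence computation using $\gcd(a,r)=1$; unimodularity is then automatic because both source and target lattices have the same covolume in $N_0\otimes\QQ$. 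The same remark applies to $R(r,a)$ with $e_3$ in place of $e_2$.
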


\begin{definition}
By a weighted blow-up of the singularity of type $\frac{1}{r}(1,a,r-a)$ at the point $p_{r-a}=1/r(1,a,r-a)$ we mean
a toric variety $\overline{X(r,a)}$ obtained by the star subdivision of the cone $\Delta(r,a)$ at the point $1/r(1,a,r-a).$
\end{definition}

The weighted blow-up induces a proper, birational morphism $\overline{X(r,a)}\too X(r,a)$ with the exceptional divisor equal to the weighed
projective space $\PP(1,a,r-a).$

\begin{definition}\label{definiton:danilov resolution}
The Danilov resolution of the singularity of type
$\frac{1}{r}(1,a,r-a)$ is a resolution obtained by the weighted blow-up of the singularity of type $\frac{1}{r}(1,a,r-a)$ at the point $p_{r-a}$ and, recursively, the Danilov resolutions of the singularities of type ${1/(r-a)}(1,\modulo{r}{r-a},\modulo{-r}{r-a})$ and
 ${1/a}(1,\modulo{-r}{a},\modulo{r}{a}).$
\end{definition}

\begin{figure}[ht]
  \centering
  \includegraphics[width=0.5\textwidth]{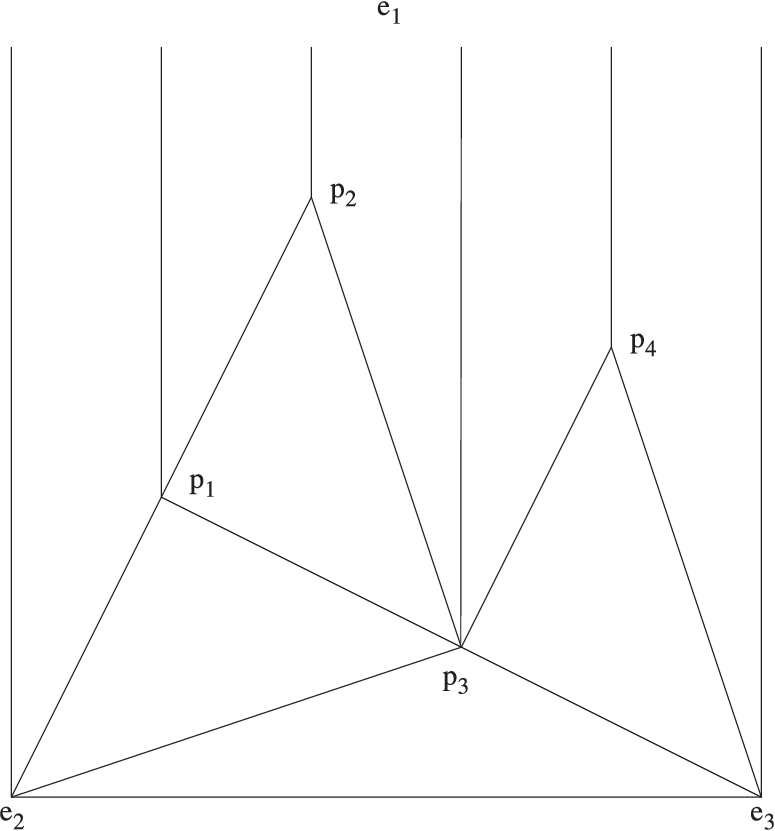}\\
  \caption{The fan of Danilov resolution of $\frac{1}{5}(1,2,3)$ cut with hyperplane $e_2^*+e_3^*=1.$}\label{figure:123 resolved}
\end{figure}

\begin{definition}
For fixed $r$ and $a,$ we call the resolution of ${1/(r-a)}(1,\modulo{r}{r-a},\modulo{-r}{r-a})$ singularity an $L$-resolution and the resolution of ${1/a}(1,\modulo{-r}{a},\modulo{r}{a})$ an $R$-resolution.
The $3$-dimensional cones of the fan of the Danilov resolution will be called $L$-cones or $R$-cones if they are subsets of the cones $\langle e_1,e_2,p_{r-a} \rangle$ or $\langle e_1,e_3,p_{r-a}\rangle,$ respectively.
\end{definition}

The fan of the Danilov resolution consist of $2r-1$ simplicial,
$3$-dimensional cones. Precisely $r$ cones of dimension $3$ contain $e_1.$ We call them $\sigma_0,\ldots,\sigma_{r-1}.$

\begin{definition}\label{d:cones sigma_i}
Set
\[ \sigma_i = \langle p_i,p_{i+1},e_1\rangle,\quad\text{for}\quad i=0,\ldots,r-1.\]
\end{definition}

Note that the resolution can be constructed by $r-1$ weighted blow-ups at the points
$p_1,\ldots,p_{r-1}$ with a suitable order.

\begin{corollary}\label{c:primitive resolutions}
The Danilov resolution of the singularity $\frac{1}{r}(1,1,r-1)$ is obtained
by the consecutive blow-ups at the points $p_{r-1},\ldots,p_{1}.$ In the case
of $\frac{1}{r}(1,r-1,1)$ the blow-ups are made at the points $p_1,\ldots p_{r-1}.$
\end{corollary}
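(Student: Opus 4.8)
The plan is to prove the statement for $\frac{1}{r}(1,1,r-1)$ (the case $a=1$) by induction on $r$, and to obtain the statement for $\frac{1}{r}(1,r-1,1)$ (the case $a=r-1$) by the mirror argument, with the $R$-branch of the recursion replacing the $L$-branch. For $a=1$ one has $b=1$, so $p_0=e_2$, $p_r=e_3$, and $p_i=\frac{1}{r}(r-i,r-i,i)$ for $1\le i\le r-1$; in particular $p_{r-a}=p_{r-1}=\frac{1}{r}(1,1,r-1)$. By Definition~\ref{definiton:danilov resolution} the Danilov resolution of $\Delta(r,1)$ is the weighted blow-up at $p_{r-1}$ followed by the Danilov resolutions of the $L$-cone $\langle e_1,e_2,p_{r-1}\rangle$, which is of type $\frac{1}{r-1}(1,1,r-2)$, and of the $R$-cone $\langle e_1,e_3,p_{r-1}\rangle$, which is of type $\frac{1}{1}(\ldots)$ and hence already smooth. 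Thus the first centre blown up is $p_{r-1}$, and every further blow-up comes from the $L$-branch.

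The cases $r=1$ (nothing to do) and $r=2$ (a single blow-up at $p_1$) are immediate. For the inductive step I would use the isomorphism $L(r,1)\colon N(r-1,1)\to N(r,1)$ of the Lemma. Since it carries $\Delta(r-1,1)$ onto $\langle e_1,e_2,p_{r-1}\rangle$ with $e_1\mapsto e_1$ and $e_3\mapsto p_{r-1}$, linearity forces the remaining ray generator $e_2$ to go to $e_2$; as $\{e_1,e_2,e_3\}$ is a $\QQ$-basis, $L(r,1)$ is thereby completely determined, namely $e_1\mapsto e_1$, $e_2\mapsto e_2$, $e_3\mapsto\frac{1}{r}(1,1,r-1)$. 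By the inductive hypothesis the Danilov resolution of $\frac{1}{r-1}(1,1,r-2)$ is the string of weighted blow-ups at the points $q_{r-2},\ldots,q_1$, where $q_j=\frac{1}{r-1}(r-1-j,r-1-j,j)$ is the analogue of $p_j$ for the parameters $(r-1,1)$. A short computation then gives $L(r,1)(q_j)=\frac{1}{r}(r-j,r-j,j)=p_j$ for $j=1,\ldots,r-2$, so, transported into $N(r,1)$, the $L$-branch resolution is precisely the sequence of blow-ups at $p_{r-2},\ldots,p_1$; preceded by the blow-up at $p_{r-1}$ this is the claim for $r$.

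For $\frac{1}{r}(1,r-1,1)$ the argument is the mirror image: now $b=r-1$, $p_i=\frac{1}{r}(i,r-i,i)$ for $1\le i\le r-1$, $p_{r-a}=p_1$, the $L$-cone is the smooth $\frac{1}{1}(\ldots)$, and the $R$-cone $\langle e_1,e_3,p_1\rangle$ is of type $\frac{1}{r-1}(1,r-2,1)$, again a member of this second family. Here $R(r,r-1)\colon N(r-1,r-2)\to N(r,r-1)$ is the linear map fixing $e_1$ and $e_3$ and sending $e_2$ to $p_1=\frac{1}{r}(1,r-1,1)$, and if the Danilov resolution of $\frac{1}{r-1}(1,r-2,1)$ is, by induction, the blow-ups at $q'_1,\ldots,q'_{r-2}$ with $q'_i=\frac{1}{r-1}(i,r-1-i,i)$, then $R(r,r-1)(q'_i)=\frac{1}{r}(i+1,r-1-i,i+1)=p_{i+1}$; together with the initial blow-up at $p_1$ this gives the order $p_1,p_2,\ldots,p_{r-1}$.

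The real content, and the only place where something must be checked, is the pair of identities $L(r,1)(q_j)=p_j$ and $R(r,r-1)(q'_i)=p_{i+1}$ — that the centres generated by the recursion, once pushed up by the explicit lattice isomorphisms of the Lemma, are literally the points $p_i$; these are short congruence computations, with the only subtlety being that the governing modulus drops from $r$ to $r-1$ at each step of the induction. One can also bypass the recursion: star-subdividing $\Delta(r,1)$ successively at $p_{r-1},\ldots,p_1$, one verifies directly that $p_j=\frac{1}{j+1}(e_1+e_2)+\frac{j}{j+1}\,p_{j+1}$ lies in the relative interior of $\langle e_1,e_2,p_{j+1}\rangle$ at the moment it is blown up and that each resulting cone ($\langle e_1,p_j,p_{j+1}\rangle$, $\langle e_2,p_j,p_{j+1}\rangle$, and $\langle e_1,e_2,p_1\rangle$) has index $1$ in $N(r,1)$, so that this string of blow-ups already produces a smooth subdivision realised by the centres $p_1,\ldots,p_{r-1}$, which one identifies with the Danilov resolution.
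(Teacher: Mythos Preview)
Your inductive proof is correct: the key identities $L(r,1)(q_j)=p_j$ and $R(r,r-1)(q'_i)=p_{i+1}$ hold exactly as you compute, and the induction is cleanly set up. The paper, however, states this corollary without proof, treating it as an immediate unwinding of the recursive Definition~\ref{definiton:danilov resolution}: for $a=1$ the $R$-branch is trivial at every step, so the recursion descends purely through the $L$-branch $\frac{1}{r}(1,1,r-1)\to\frac{1}{r-1}(1,1,r-2)\to\cdots$, blowing up $p_{r-1},p_{r-2},\ldots,p_1$ in turn (and symmetrically for $a=r-1$). Your write-up makes this explicit by actually tracking the centres through the lattice isomorphisms $L(r,1)$ and $R(r,r-1)$, which is more than the paper does.

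One small caveat on your final paragraph: the alternative ``bypass'' argument --- star-subdividing directly at $p_{r-1},\ldots,p_1$ and checking smoothness --- produces \emph{a} smooth fan, but does not by itself show that this fan is the Danilov resolution, which is defined recursively and not as ``the'' smooth toric resolution. So that alternative still needs the identification with the recursion (i.e.\ your main inductive argument) to conclude; it is a consistency check rather than an independent proof.
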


\begin{definition}
Let $D_i$ denote the $T$-invariant toric divisor associated to the ray generated by the lattice point $p_i$ for $i=0,\ldots,r.$
Let $E_j$ denote the $T$-invariant toric divisor associated to the ray generated by $e_j$ for $j=1,2,3.$

\end{definition}
Note that $D_0=E_2$ and $D_r=E_3.$

\section{Divisors $X_i,Y_i,Z_i,R_i$ and their properties}\label{s:divisors X,Y,Z}

In this section we start by defining recursively a permutation $\tau,$ depending on $a,r.$ It will be
subsequently used in construction of divisors $X_i,Y_i,Z_i$ on the Danilov resolution.
These divisors will define the structure of a gnat family in the sense of Logvinenko~\cite{Logvinenko:RIMS} on the Danilov resolution.


\begin{definition}\label{d:tau}
If $a\in\{1,r-1\}$ set $\tau(r,a,i)=\modulo{ai-1}{r}$ for $i=0,\ldots,r-1,$ and otherwise
\[
\tau(r,a,i)=\begin{cases}
\tau(r-a,\modulo{r}{r-a},\modulo{i}{r-a}), & \text{if $i\ge a$},\\
(r-a)+\tau(a,\modulo{-r}{a},i), & \text{if $i<a.$}\\
\end{cases}
\]
\end{definition}

Note that $\tau(r,a,0)=r-1.$ The function $\tau(r,a,\cdot)$ is a permutation of the set $\{0,\ldots,r-1\}.$ Note the recursive nature
of the above definition. The permutation $\tau$ will play a crucial role in determining the stability parameters connected with the moduli structure on the Danilov resolution.

\begin{example}\label{e:perm}
In order to compute $\tau(5,2)$ we need to know $\tau(2,1)=(0,1)$ and $\tau(3,2)=(0,2)(1)$ (written as cycles in the standard notation). Then,
\[\tau(5,2,i)=\tau(3,2,\modulo{i}{3}),\quad\ \text{for}\ i\ge 2,\]
\[\tau(5,2,i)=3+\tau(2,1,i),\quad\text{for}\ i<2,\]
 hence $\tau(5,2)=(0,4,1,3,2)$ as a cycle. In an analogous way we could check that $\tau(7,2)=(0,6,3,2)(1,5,4)$ which in turn allows us to compute $\tau(12,7)=(0,11,3,7)(1,10,4,6,8,2,5,9)$.
\end{example}

\begin{definition}
We call the sequence of numbers $i,i+(r-a),\ldots,i+s(r-a)$ an $L$-brick if
\begin{enumerate}[i)]
  \item $i<r-a,$
  \item $i+(s+1)(r-a)>r,$
  \item every number in the sequence is strictly smaller than $r.$
\end{enumerate}

The sequence of numbers $i,i+a,\ldots,i+sa$ is called an $R$-brick if
\begin{enumerate}[1)]
  \item $i<a$
  \item $i+(s+1)a>r,$
  \item every number in the sequence is strictly smaller than $r.$
\end{enumerate}
\end{definition}

The $L$- and $R$-bricks connect the
characters of a cyclic group of order $r$ with the characters of cyclic groups of order
$r-a$ and $a,$ respectively. They can be identified with fibers of the projections
$\ZZ/r\ZZ\too \ZZ/a\ZZ$ and $\ZZ/r\ZZ\too \ZZ/(r-a)\ZZ,$ which shows that there are $a$ different $R$-bricks and $r-a$ different $L$-bricks.



%
For fixed $r$ and $a$ let $Y(r,a)$ denote the Danilov resolution of the $\frac{1}{r}(1,a,r-a)$ singularity.
The rest of this section is devoted to finding effective toric divisors $X_i,Y_i,Z_i$ on $Y(r,a)$ for $i=0,\ldots, r-1.$ These divisors will yield $\QQ$-divisors $R_i,$  used in defining the structure of a moduli space on $Y(r,a).$
Note that the addition in the indices of $X_i,Y_i,Z_i$ is always meant modulo $r.$

\begin{definition}\label{d:divisors}
Let
\[Y_{i-a}=\sum_{k=0}^{\tau(r,a,i)} D_k,\quad\text{for }i=0,\ldots,r-1,\]
\[Z_{i}=\sum_{k=\tau(r,a,i)+1}^{r} D_k,\quad\text{for }i=0,\ldots,r-1,\]
and let the divisor $X_i$ be defined by the following equations:
\[X_{i}+Z_{i+1}=Z_{i}+X_{i-a},\quad\text{for }i=0,\ldots,r-1,\]
\[X_{0}=E_1.\]
\end{definition}

The last condition ensures that the divisors $X_i$ are uniquely determined. Note that by definition
\[Y_{i-a}+Z_i=\sum_{i=0}^r D_k,\]
that is $Y_{i-a}+Z_i$ does not depend on $i.$ Moreover
\[X_{i}+Y_{i+1}=Y_{i}+X_{i+a}.\]

The divisors $X_i,Y_i,Z_i$ satisfy following commutativity relations:
\begin{equation}\label{e:comm xy}
    X_{i}+Y_{i+1}=Y_{i}+X_{i+a},
\end{equation}
\begin{equation}\label{e:comm xz}
    X_{i}+Z_{i+1}=Z_{i}+X_{i-a},
\end{equation}
\begin{equation}\label{e:comm yz}
    Y_i+Z_{i+a}=Z_i+Y_{i-a}.
\end{equation}

\begin{lemma}\label{l:y_z_trivial}
For $a=1$ we have $Y_i=D_0+\ldots+D_i$ for $i=0,\ldots,r-1$ and $Z_0=D_r,\ Z_i=D_i+\ldots+D_r$ for $i=1,\ldots,r-1$. Moreover $X_i=Y_i+E_1-D_0$ for $i=0,\ldots,r-1$.
For $a=r-1$ we have $Y_0=D_0,\ Y_i=D_0+\ldots+D_{r-i}$ for $i=1,\ldots,r-1$ and $Z_i=D_{r-i}+\ldots+D_r,\ X_i=Z_i+E_1-D_r$ for $i=0,\ldots,r-1$.

\end{lemma}

\begin{proof}
To see this, it is enough to combine Definitions~\ref{d:tau} and~\ref{d:divisors}.
\end{proof}

\begin{example}\label{e:div_y_and_z}
For $r=5$ and $a=2$ we have $\tau(5,2)=(0,4,1,3,2)$ as a cycle, cf. Example~\ref{e:perm}, and hence
\begin{align*}
Y_0&=D_0, & Z_0&=D_5,\\
Y_1&=D_0+D_1+D_2, & Z_1&=D_4+D_5,\\
Y_2&=D_0+D_1, & Z_2&=D_1+D_2+D_3+D_4+D_5,\\
Y_3&=D_0+D_1+D_2+D_3+D_4, & Z_3&=D_3+D_4+D_5,\\
Y_4&=D_0+D_1+D_2+D_3, & Z_4&=D_2+D_3+D_4+D_5.
\end{align*}
By solving the linear equations $X_i+Z_{i+1}=Z_i+X_{i-2}$ in $X_i's$ we have:
\begin{align*}
X_0&=E_1,\\
X_1&=E_1+D_2+D_4,\\
X_2&=E_1+D_1+D_2,\\
X_3&=E_1+D_4,\\
X_4&=E_1+D_1+2D_2+D_2+D_3+D_4.
\end{align*}
\end{example}

\begin{definition}
    For fixed $a$ and $r,$ by $X^L_i,Y^L_i,Z^L_i$ we mean divisors on the $L$-resolution
    defined as in Definition~\ref{d:divisors} for $r_L=r-a$ and $a_L=\modulo{r}{r-a}.$ Similarly, by $X^R_i,Y^R_i,Z^R_i$ we mean divisors on the $R$-resolution defined for $r_R=a$ and $a_R=\modulo{-r}{a}.$ In particular
    \[Z^L_i=\sum_{k={\tau(r_L,a_L,i)+1}}^{r_L} D_k,\quad\text{for }i=0,\ldots,r_L-1,\]
    \[Z^R_i=\sum_{k={\tau(r_R,a_R,i)+1}}^{r_R} D_{k+r_L},\quad\text{for }i=0,\ldots,r_R-1.\]
    We note that in the definition of $Z^R_i$ there is a shift by $r_L$ in the index of $D_i$ since the divisor $D_{i+r_L}$ on the resolution $Y(r,a)$
    corresponds to the divisor $D_i$ on the resolution $Y(r_R,a_R)$.
\end{definition}

\begin{example}\label{e:L and R-divisors}
For $r=5$ and $a=2$, by Lemma~\ref{l:y_z_trivial}, we have
\begin{align*}
Z^L_0&=D_3,         \quad &X_0^L&=E_1,\\
Z^L_1&=D_2+D_3,     \quad &X_1^L&=E_1+D_2,\\
Z^L_2&=D_1+D_2+D_3, \quad &X_2^L&=E_1+D_1+D_2,\\
Z^R_0&=D_5,         \quad &X_0^R&=E_1,\\
Z^R_1&=D_4+D_5,     \quad &X_1^R&=E_1+D_4.
\end{align*}
Note the shift of indices in the divisors $D_i's$ on the $R$-resolution by $r_L=3$. Since the $L$-resolution
is a resolution of the $\frac{1}{3}(1,2,1)$ singularity, we have by Lemma~\ref{l:y_z_trivial} \[Y^L_0=D_0,\ \ Y^L_1=D_0+D_1,\ \ Y^L_2=D_0+D_1+D_2\]
On the other hand, the $R$-resolution is a resolution of the $\frac{1}{2}(1,1,1)$ singularity, hence
\[Y^R_0=D_3,\ \ Y^R_1=D_3+D_4.\]
\end{example}

Following propositions will prove useful in later sections.

\begin{lemma}\label{l:recursion for Z}
 Let $i,\ldots,i+s(r-a)$ be an $L$-brick. Restriction of the divisor $Z_{i+s(r-a)}$ to the $L$-resolution is equal to the divisor $Z^L_i.$ If $i,\ldots,i+sa$ is an $R$-brick then restriction of the divisor $Z_i$ to the $R$-resolution is equal to the divisor $Z^R_i.$
\end{lemma}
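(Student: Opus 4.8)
The statement is recursive in nature, so I would prove it by unwinding the recursive definitions of both the Danilov resolution (Definition~\ref{definiton:danilov resolution}) and the divisors $Z_i$ (Definition~\ref{d:divisors}), and matching them up via the $L$- and $R$-bricks. Recall that an $L$-brick $i,i+(r-a),\ldots,i+s(r-a)$ is, by construction, the fiber over $i$ of the projection $\ZZ/r\ZZ\to\ZZ/(r-a)\ZZ$, so the last element $i+s(r-a)$ is the unique representative of this fiber that is ``largest below $r$''. The key bookkeeping fact I will need is how the toric divisors $D_k$ on $Y$ restrict to the $L$-resolution: under the isomorphism $L(r,a)$ the cone $\langle e_1,e_2,p_{r-a}\rangle$ becomes $\Delta(r-a,\modulo{r}{r-a})$, with $L(r,a)(e_3)=p_{r-a}$, and the lattice points $p_k^L$ of the $L$-resolution pull back to a subset of the $p_k$'s indexed precisely by the $L$-brick containing $k$. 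So the first step is to pin down, for each $j\in\{0,\ldots,r-a\}$, which divisor $D_{k}$ on $Y$ restricts to $D^L_j$ on the $L$-resolution — I expect $D^L_j$ to be the restriction of $D_k$ where $k$ is the last element of the $L$-brick indexed by $j$ (with the boundary cases $D^L_0=E^L_2$ and $D^L_{r-a}=E^L_3=D_r|_{L}$ handled separately, using $D_0=E_2$, $D_r=E_3$ and $L(r,a)(e_3)=p_{r-a}$).

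Granting that dictionary, the proof of the first assertion goes as follows. By definition $Z_{i+s(r-a)}=\sum_{k=\tau(r,a,i+s(r-a))+1}^{r} D_k$. Since $i,\ldots,i+s(r-a)$ is an $L$-brick we have $i\ge a$ only for... — more precisely, I need the clause of Definition~\ref{d:tau} for indices $\ge a$, which says $\tau(r,a,m)=\tau(r-a,\modulo{r}{r-a},\modulo{m}{r-a})$ when $m\ge a$. Applying this with $m=i+s(r-a)$ (which is $\ge a$ since it is the top of an $L$-brick, at least once one checks the degenerate small cases), and noting $\modulo{i+s(r-a)}{r-a}=\modulo{i}{r-a}$, we get $\tau(r,a,i+s(r-a))=\tau(r_L,a_L,\modulo{i}{r-a})$, which is exactly the value appearing in the definition of $Z^L_{i}$ on the $L$-resolution. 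It then remains to match the two sums: $\sum_{k=\tau(r,a,i+s(r-a))+1}^{r}D_k$ restricted to the $L$-resolution should equal $\sum_{j=\tau(r_L,a_L,\modulo{i}{r-a})+1}^{r-a}D^L_j$. Here I would argue that restriction to the $L$-resolution kills every $D_k$ whose ray $p_k$ does not lie in the cone $\langle e_1,e_2,p_{r-a}\rangle$, and that the surviving $D_k$'s are grouped by $L$-bricks, each brick contributing exactly one $D^L_j$ (the one indexed by its top element, per the dictionary above); so the surviving range of $k$'s, $\{\tau(r,a,i+s(r-a))+1,\ldots,r\}$, is a union of complete $L$-bricks mapping bijectively onto $\{\tau(r_L,a_L,\modulo{i}{r-a})+1,\ldots,r-a\}$. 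The $R$-brick statement is entirely parallel, using instead the $i<a$ clause of Definition~\ref{d:tau}, namely $\tau(r,a,m)=(r-a)+\tau(a,\modulo{-r}{a},m)$, together with the analogous restriction dictionary for $R(r,a)$ and the fact that $R(r,a)(e_1)=e_1$, $R(r,a)(e_2)=p_{r-a}$.

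**Main obstacle.** The genuinely delicate point is the bookkeeping of which $D_k$ restricts to which $D^L_j$, i.e. establishing the brick-to-divisor dictionary cleanly, and in particular verifying that the range of indices $\{\tau(r,a,i+s(r-a))+1,\ldots,r\}$ is \emph{exactly} a union of full $L$-bricks with no partial bricks at the ends — this is where the choice of $\tau$ and the defining property of $L$-bricks (conditions i)--iii)) conspire. I would also need to be slightly careful about the base case $a\in\{1,r-1\}$ of the recursion, where $\tau(r,a,i)=\modulo{ai-1}{r}$ and the ``$L$-resolution'' and ``$R$-resolution'' degenerate (cf. Corollary~\ref{c:primitive resolutions}), to make sure the statement is vacuous or trivially true there so that the recursion is well-founded. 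Everything else — the two computations of $\tau$ via its recursion, and the matching of the truncated sums — is then essentially a routine induction on $r$.
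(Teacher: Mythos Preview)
Your core computation---using the $L$-brick conditions to show $i+s(r-a)\ge a$ and $\modulo{i+s(r-a)}{r-a}=i$, hence $\tau(r,a,i+s(r-a))=\tau(r_L,a_L,i)$, and dually $\tau(r,a,i)=(r-a)+\tau(r_R,a_R,i)$ for an $R$-brick---is exactly what the paper does, and in fact is \emph{all} the paper does: the two lines of the paper's proof are precisely these two $\tau$-identities.

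Where you go astray is in the ``dictionary'' you propose for restricting the $D_k$. You conjecture that $D^L_j$ is the restriction of $D_k$ where $k$ is the top element of the $L$-brick indexed by $j$, and you worry that the interval $\{\tau(r,a,i+s(r-a))+1,\ldots,r\}$ must decompose into full $L$-bricks. This conflates two unrelated index sets: $L$-bricks partition the McKay-quiver vertices $\{0,\ldots,r-1\}$ (i.e.\ the subscripts of $Z_\bullet$) into arithmetic progressions with step $r-a$, whereas the $D_k$ are indexed by the rays $p_0,\ldots,p_r$. These are not the same combinatorics, and an interval of integers is never a union of $L$-bricks in the paper's sense unless it is all of $\{0,\ldots,r-1\}$.

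The actual dictionary is much simpler. Since $L(r,a)$ sends $e_1\mapsto e_1$, $e_2\mapsto e_2$, $e_3\mapsto p_{r-a}$, a direct computation (or the observation that the second and third coordinates already match and both first coordinates lie in $[0,1)$) gives $L(r,a)(p^L_j)=p_j$ for $j=0,\ldots,r-a$. Thus $D_j|_L=D^L_j$ for $j\le r-a$ and $D_j|_L=0$ for $j>r-a$. With this in hand the lemma is immediate from the $\tau$-identity you already have:
\[
Z_{i+s(r-a)}\big|_L=\sum_{k=\tau(r,a,i+s(r-a))+1}^{r}D_k\big|_L=\sum_{k=\tau(r_L,a_L,i)+1}^{r-a}D^L_k=Z^L_i,
\]
and analogously on the $R$-side using $D_{r-a+j}|_R=D^R_j$. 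So the ``main obstacle'' you flag is a non-issue; the only content is the $\tau$-recursion, which you handled correctly.
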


\begin{proof}
Observe that if $i,\ldots,i+s(r-a)$ is an $L$-brick, then $i+s(r-a)\ge a$ and $\modulo{i+s(r-a)}{r-a}=i.$ Therefore $\tau(r,a,i+s(r-a))=\tau(r-a,\modulo{r}{},i).$ If $i,\ldots,i+sa$ is an $R$-brick then $i<a$ and $\tau(r,a,i)=(r-a)+\tau(a,\modulo{-r}{},i).$
\end{proof}

Similar fact holds for restrictions of divisors $X_i.$

\begin{lemma}\label{l:recursion for X}
For any $i\le r-2$ the divisor $X_i$ restricted to the $L$-re\-so\-lu\-tion is equal to the divisor $X_{j}^L,$ where $j=\modulo{i}{r-a},$  and the divisor $X_i$ restricted to the $R$-re\-so\-lu\-tion is equal to the divisor $X_j^R,$ where $j=\modulo{i}{a}.$
\end{lemma}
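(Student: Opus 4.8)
The plan is to prove the statement by induction on $r$, running in parallel with the recursive structure of the Danilov resolution and the recursive definitions of $\tau$ and of the divisors $X_i$. The base case $a\in\{1,r-1\}$ is handled directly from Corollary~\eqref{c:primitive resolutions} and the explicit formula $\tau(r,a,i)=\modulo{ai-1}{r}$; here the $L$- or $R$-resolution is one of the two primitive resolutions, and the restriction of $X_i$ can be read off from the sequence of blow-ups together with the defining relations $X_0=E_1$, $X_i+Z_{i+1}=Z_i+X_{i-a}$. For the inductive step I would first fix $i\le r-2$ and restrict attention to the $L$-resolution, i.e.\ the toric subvariety cut out by the cone $\langle e_1,e_2,p_{r-a}\rangle$; the argument for the $R$-resolution is symmetric, swapping the roles of $a$ and $r-a$ and using the second halves of Definition~\eqref{d:tau} and of Lemma~\eqref{l:recursion for Z}.

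The key observation is that restriction of divisors to the $L$-resolution is additive and kills exactly the rays not lying in $\langle e_1,e_2,p_{r-a}\rangle$, so it descends through the linear recursion defining the $X_i$. Concretely, I would restrict the relation~\eqref{e:comm xz}, $X_i+Z_{i+1}=Z_i+X_{i-a}$, to the $L$-resolution. By Lemma~\eqref{l:recursion for Z} the restrictions of the relevant $Z$'s are governed by $L$-bricks and equal the corresponding $Z^L$ divisors, so the restricted relation becomes (after reindexing via the $L$-brick structure, which realizes $\ZZ/r\ZZ\to\ZZ/(r-a)\ZZ$) precisely the defining relation $X^L_{j}+Z^L_{j+1}=Z^L_j+X^L_{j-a_L}$ for the divisors on the $L$-resolution, with $a_L=\modulo{r}{r-a}$. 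Since the family $\{X^L_j\}$ is uniquely determined by that recursion together with the normalization $X^L_0=E_1$, it suffices to check that the restriction of $X_0=E_1$ to the $L$-resolution is $E^L_1$ (the toric divisor of the ray $e_1$, which lies in the $L$-cone and is fixed by the isomorphism $L(r,a)$ of the lemma after Definition~\eqref{definiton:danilov resolution}), and to track how the index $i$ transforms: $i\mapsto \modulo{i}{r-a}$ is exactly the map sending a vertex of the $\ZZ/r\ZZ$ quiver to its image in $\ZZ/(r-a)\ZZ$, i.e.\ to the $L$-brick containing it. One has to be slightly careful about which representative of the $L$-brick appears on the left versus the right of the recursion and about the indices that are "lost" when $i=r-1$ (which is why the hypothesis is $i\le r-2$), but the relations~\eqref{e:comm xz} together with the uniqueness clause in Definition~\eqref{d:divisors} close the induction.

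The main obstacle I anticipate is bookkeeping rather than conceptual: matching the index shift $i\mapsto\modulo{i}{r-a}$ with the modular arithmetic hidden in $\tau$ and in the recursion $X_i+Z_{i+1}=Z_i+X_{i-a}$, and verifying that the restriction map genuinely carries the $\ZZ/r\ZZ$-indexed linear system onto the $\ZZ/(r-a)\ZZ$-indexed one without collision—equivalently, that distinct $i$'s in the same $L$-brick restrict to the same $X^L_j$, which forces checking that the "telescoping" of the relations~\eqref{e:comm xz} along an $L$-brick restricts to a triviality on the $L$-resolution. This in turn follows from Lemma~\eqref{l:recursion for Z}, since the $Z$-divisors appearing in consecutive relations along a brick restrict to the same $Z^L$, so their differences restrict to zero; hence $X_i$ and $X_{i'}$ with $\modulo{i}{r-a}=\modulo{i'}{r-a}$ indeed have equal restriction. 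Assembling these pieces gives both halves of the lemma.
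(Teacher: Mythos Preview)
Your proposal is correct and matches the paper's approach: the paper likewise restricts the commutativity relation~\eqref{e:comm xz}, uses Lemma~\eqref{l:recursion for Z} to control the restricted $Z$-divisors, telescopes along a brick to show that all $X_i$ in one brick have equal restriction, and then invokes the uniqueness of the system defining $X^L_j$ (resp.\ $X^R_j$) together with $X_0\vert = E_1$. The paper carries out the $R$-case explicitly and, as you anticipate, singles out the boundary index (there $i=a-1$) for a separate check; your outer ``induction on $r$'' framing is harmless but not actually used, since the argument is direct once Lemma~\eqref{l:recursion for Z} is in hand.
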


\begin{proof}
We give the proof only in the case of restriction to the $R$-resolution.
First we show that if $i,\ldots,i+sa$ is an $R$-brick such that $i+sa\neq r-1,$ then the $R$-restrictions of the divisors $X_i,\ldots,X_{i+sa}$ are equal. To see this, observe that the restrictions of the divisors $Z_j$ for $j\ge a$
to the $R$-resolution are equal by definition  of the permutation $\tau$ and use the commutativity relation~\eqref{e:comm xz}
\[Z_j-Z_{j+1}=X_j-X_{j-a}\]
for $j=i+a,i+2a,\ldots,i+sa.$

If $i,\ldots,i+sa$ is an $R$-brick such that $i+sa=r-1$ then, by a similar proof, the restrictions of the divisors $X_i,\ldots,X_{i+(s-1)a}$ (i.e. all but the last) to the $R$-resolution are equal.

Hence, it is enough to prove the lemma assuming $i<a.$ Denote by $X_i|_R$ restriction of the divisor $X_i$ to the $R$-resolution. Obviously $X_0|_R=X_0^R$ and by Lemma~\ref{l:recursion for Z} we obtain relations
\[X_i|_R+Z_{i+1}^R=Z_i^R+X_{i-a}|_R,\]
for $i=0,\ldots a-2.$ We have proven already that $X_{i-a}|_R=
X_{\modulo{i+r}{a}}|_R$ so the above relations can be rewritten as
 \[X_i|_R+Z_{i+1}^R=Z_i^R+X_\modulo{i+r}{a}|_R.\]
These are exactly the equations~\eqref{e:comm xz} for $r_R=a$ and $a_R=\modulo{-r}{a},$ so
\[X_i|_R=X_i^R\quad\text{for}\quad i=0,\ldots,a-2.\]

Let $j$ be the last element of an $R$-brick containing $a-1.$ Then $j\neq r-1$ so $0\le\modulo{j+a}{r}<a-1$ and the equation~\eqref{e:comm xz}
\[X_{j+a}+Z_{j+a+1}=X_j+Z_{j+a},\]
 restricted to the $R$-resolution 
 becomes
 \[X_{j+a-r}^R+Z_{j+a-r+1}^R=X_{a-1}|_R+Z_{j+a-r}^R.\]
This finishes the proof as the above is exactly the equation~\eqref{e:comm xz} for $r_R=a,a_R=\modulo{-r}{a}$ and $i=j+a-r.$
\end{proof}

\begin{example}\label{e:restricitons}
In the case of $r=5$ and $a=2$ there are 3 different $L$-bricks $(0,3), (1,4),(2)$ and 2 different $R$-bricks $(0,2,4),(1,3).$
The $L$-resolution contains divisors $D_0,D_1,D_2,D_3,E_1$ and the $R$-resolution contains divisors $D_3,D_4,D_5,E_1.$ By Examples~\ref{e:div_y_and_z} and~\ref{e:L and R-divisors} we note that
\[Z_3|_L=Z_0^L,\quad
Z_4|_L=Z_1^L,\quad
Z_2|L=Z_2^L,\]
\[Z_0|_R=Z_0^R,\quad
Z_1|_R=Z_1^R.\]
Moreover,
\[X_0|_L=X_3^L=X_0^L,\ \ X_0|_R=X_2|_R=X_0^R,\]
\[X_1|_L=X_1^L,\ \ X_1|R=X_1^R.\]
\end{example}

\begin{lemma}\label{l:X_i eff}
For $a,r$ coprime, the divisors $X_i-E_1, Y_i-E_2, Z_i-E_3$  are effective for $i=0,\ldots,r-1.$
\end{lemma}
\begin{proof}
The result is true for $Y_i-E_2$ and $Z_i-E_3$ by definition, since $E_2=D_0$ and $E_3=D_r$. It remains to prove the result for $X_i-E_1$.

Note that for $a\in\{1,r-1\}$ either $X_i-E_1=Y_i-E_2$ or $X_i-E_1=Z_i-E_3$ by Lemma~\ref{l:y_z_trivial}.
For $1<a<r-1,$ by recursion and Lemma~\ref{l:recursion for X}, the restrictions
of $X_i-E_1$ to $L$- and $R$-resolution are effective for $i\neq r-1.$
Finally, note that
\[X_{r-1}-E_1=(X_{r-a-1}-E_1)+(Z_{r-1}-Z_0),\]
where both summands are effective, since $Z_0=D_r$ for any $r,a$.
\end{proof}

\begin{definition}\label{d:d_x}
Let
\[D_X=\frac{1}{r}\divisor(re_1^*),\]
\[D_Y=\frac{1}{r}\divisor(re_2^*),\]
\[D_Z=\frac{1}{r}\divisor(re_3^*),\]
be $\QQ$-divisors on $Y$ where $\divisor(re_i^*)$ denotes the
divisor of zeros and poles of the rational function $re_i^*$.
\end{definition}

We introduce the $\QQ$-divisors $R_i$  which later will define the desired family of McKay quiver representations on $Y(r,a).$

\begin{definition}\label{d:r_i}
For fixed $r$ and $a,$ define the $\QQ$-divisors $R_i$ for $i=0,\ldots,r-1$ by the equations
\[Z_i=D_Z+R_i-R_{i-a},\]
\[R_0=0.\]
\end{definition}

The divisors $R_i$ are uniquely determined by the condition $R_0=0$ since $r,a$ are coprime and the rank of the matrix determining equations for $R_i$ is equal to $r-1.$
Using the equation $Z_{i}+Y_{i-a}=D_Y+D_Z$ we get
\[Y_i=D_Y+R_i-R_{i+a}.\]

\begin{lemma}\label{l:auxiliary for R_i}
For any coprime $r$ and $a$
\[R_1=D_X-E_1.\]
\end{lemma}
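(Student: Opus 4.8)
The plan is to prove this by induction on $r$, mirroring the recursive structure of both the Danilov resolution and the permutation $\tau$. The claim $R_1 = D_X - E_1$ is an identity between $\QQ$-divisors supported on the $D_i$; since $R_1$ is characterized by the linear relation $Z_1 = D_Z + R_1 - R_{1-a}$ together with $R_0 = 0$ (and by the preceding remark this determines $R_1$ because $\gcd(r,a)=1$ forces the relevant matrix to have rank $r-1$), it will suffice to verify that $D_X - E_1$ satisfies the same characterizing data, or equivalently to identify $R_1$ on each toric chart. I would run the induction through the $L$- and $R$-resolutions: the base case $a\in\{1,r-1\}$ is handled directly from Corollary~\eqref{c:primitive resolutions} and the explicit formula $\tau(r,a,i)=\modulo{ai-1}{r}$, where $X_i - E_1$ is literally $Y_i-E_2$ or $Z_i-E_3$ and the $R_i$ can be written down by hand.

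For the inductive step with $1<a<r-1$, I would restrict the asserted identity to the $L$-resolution and to the $R$-resolution separately and check it holds on each, then argue that an identity of toric divisors on $Y$ which holds after restriction to every maximal cone must hold on $Y$ (the cones $\sigma_i$, together with the $L$- and $R$-cones, cover the fan, and each $D_i$ appears in one of the recursive sub-resolutions by the brick decomposition). The key computational input is Lemma~\eqref{l:recursion for X}, giving $X_i|_R = X_{\modulo{i}{a}}^R$ and $X_i|_L = X_{\modulo{i}{r-a}}^L$ for $i\le r-2$, together with Lemma~\eqref{l:recursion for Z} for the $Z_i$. One also needs to track how $D_X$, $D_Y$, $D_Z$ restrict: by definition $D_X = \sum e_1^*(p_i)D_i$, and under the lattice isomorphisms $L(r,a)$, $R(r,a)$ of the first Lemma the coordinates of the $p_i$ restrict compatibly to the corresponding $D^L_X$, $D^R_X$ on the sub-resolutions (using $L(r,a)(e_1)=R(r,a)(e_1)=e_1$, which is exactly why the first coordinate $e_1^*$ behaves well). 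Then the defining relation for $R_i^L$ on the $L$-side, namely $Z_i^L = D_Z^L + R_i^L - R_{i-a_L}^L$, pulls back to the relation defining $R_i$ restricted to the $L$-resolution, and the inductive hypothesis $R_1^L = D_X^L - E_1$ (and the analogous $R$-statement) gives the result on each chart.

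The main obstacle I anticipate is bookkeeping around the indices $0$ and $r-1$: the permutation $\tau$ has the special value $\tau(r,a,0)=r-1$, the divisor $X_0 = E_1$ is the normalization point of the recursion, and Lemma~\eqref{l:recursion for X} explicitly excludes $i=r-1$, so one must handle the brick containing $a-1$ (whose last element may or may not be $r-1$) with the separate argument already used in the proof of that lemma, via the relation $X_{r-1}-E_1 = (X_{r-a-1}-E_1)+(Z_{r-1}-Z_0)$. Concretely, I expect the delicate point to be showing that the index shift $i\mapsto i-a$ appearing in $R_i\mapsto R_{i-a}$ is compatible, under the brick identification $\ZZ/r\ZZ \to \ZZ/a\ZZ$ and $\ZZ/r\ZZ\to\ZZ/(r-a)\ZZ$, with the shifts $i\mapsto i-a_R$ and $i\mapsto i-a_L$ on the sub-resolutions — this is where the arithmetic $a_R = \modulo{-r}{a}$, $a_L = \modulo{r}{r-a}$ enters and must be reconciled with reduction modulo $a$ or $r-a$. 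Once that compatibility is in place, restricting to $i=1$ and invoking the inductive hypothesis closes the argument; the identity $R_0=0$ is preserved under restriction since $D_0 = E_2$ lies outside the support of $R_1$ in the relevant sense.
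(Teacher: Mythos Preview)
Your recursive strategy is in the same spirit as the paper's, but the paper takes a more direct route that you are missing, and this creates a real gap in your plan.

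The paper's first move is to \emph{solve the recursion explicitly for $R_1$}: iterating $R_i - R_{i-a} = Z_i - D_Z$ from $R_0=0$ backwards $(r-b)$ steps gives
\[
R_1 = (r-b)D_Z - \bigl(Z_{a+1}+Z_{2a+1}+\cdots+Z_0\bigr),
\]
so the claim $R_1 = D_X - E_1$ becomes an identity involving only $D_X$, $D_Z$, $E_1$ and a specific sum of $Z_j$'s. The recursion is then run on this identity, using that the $Z_j$'s have known restriction behavior (Lemma~\ref{l:recursion for Z}) and that the index set $\{\modulo{ka+1}{r}\}$ splits, modulo $a$ and modulo $r-a$, into the analogous index sets for the subproblems. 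The final geometric ingredient is the observation that $e_1^*(p_{i+1})\ge e_1^*(p_i)$ exactly when the ray dual to $\langle p_i,p_{i+1}\rangle$ is $e_1^*-(r-b)e_3^*$, which controls the $D_X-(r-b)D_Z$ contribution.

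Your plan instead tries to recurse directly on the $R_i$, by restricting the \emph{defining} relations $Z_i = D_Z + R_i - R_{i-a}$ to the $L$- and $R$-resolutions and matching them with the defining relations for $R_i^L$, $R_i^R$. The problem is that these relations involve $D_Z$, and you have not shown that $D_Z|_L = D_Z^L$. Your own parenthetical correctly notes that $e_1^*$ behaves well because $L(r,a)(e_1)=R(r,a)(e_1)=e_1$; but $e_3$ is \emph{not} fixed by $L(r,a)$ (indeed $L(r,a)(e_3)=p_{r-a}$), so there is no reason for $D_Z=\sum e_3^*(p_i)D_i$ to restrict to $D_Z^L$. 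Without this, the claim that ``the defining relation for $R_i^L$ pulls back to the relation defining $R_i$ restricted'' has no force, and you cannot conclude $R_1|_L = R_1^L$. A second, related issue is that the restricted system has $r$ equations in $r$ unknowns $R_i|_L$, while the target system for $R_i^L$ has only $r-a$; you would first need to show the $R_i|_L$ are constant along $L$-bricks, which is essentially the explicit computation you are trying to avoid.

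The paper's explicit formula sidesteps both issues: once $R_1$ is written purely in terms of $Z_j$'s, only the restriction lemmas already proved are needed, and only $D_X$ (for which your $e_1$-argument applies) enters from the $D_\bullet$ side.
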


\begin{proof}
By definition $R_1=(r-b)D_Z-(Z_0+Z_{-a}+Z_{-2a}+\ldots+Z_{-(r-b-1)a})=(r-b)D_Z-(Z_{a+1}+Z_{2a+1}+Z_{3a+1}+\ldots+Z_0)$ since $\modulo{-(r-b-i)a}{r}=ia+1$. Therefore it is enough to show that
\[E_1=D_X-(r-b)D_Z-(Z_{a+1}+Z_{2a+1}+\ldots+Z_{0}).\]
Assume that  $a\notin\{1,r-1\}$ as otherwise the statement is trivial, cf.~Lemma~\ref{l:y_z_trivial}. We use a recursive argument.
Observe that the numbers in the sequence
\begin{equation*}\tag{$\star$}\label{eq:sequence for R_i}
\modulo{a+1}{r},\modulo{2a+1}{r},\modulo{3a+1}{r},\ldots,\modulo{0}{r},
\end{equation*}
not greater than $a-1$ (i.e. the first numbers in $R$-bricks) are equal to the numbers \[\modulo{a_R+1}{a},\modulo{2a_R+1}{a},\modulo{3a_R+1}{a},\ldots,\modulo{0}{a},\]
where $a_R=\modulo{-r}{a}.$  Moreover, the numbers in the sequence~\eqref{eq:sequence for R_i}
greater or equal to $a$ (i.e. the last numbers in $L$-bricks) are equal modulo $r-a$ to the numbers
\[\modulo{a_L+1}{r-a},\modulo{2a_L+1}{r-a},\modulo{3a_L+1}{r-a},\ldots,\modulo{0}{r-a},\]
where $a_L=\modulo{r}{r-a}.$ We omit a proof of this arithmetic fact. Let $W^L, W^R$ denote the pushforwards to the Danilov resolution of the
divisors $D^L_X-(r_L-b_L)D^L_Z$ and $D^R_X-(r_R-b_R)D^R_Z$ (where $a_L b_L=1$ modulo $r_L$ and $a_R b_R=1$ modulo $r_R$) computed in the fans of $L$- and $R$-resolutions in the lattices $N(r_L,a_L)$ and $N(r_R,a_R)$, respectively. Note that the first coordinate of the point $p_{i+1}$ is not smaller that the first coordinate of the point $p_i$ if and only if the toric ray dual to the cone $\langle p_i,p_{i+1}\rangle$ is equal to $e_1^*-(r-b)e_3^*,$ which gives an intrinsic explanation of the value of $e_1^*-(r-b)e_3^*$ on the generator of a ray. Since $(e_1^*-(r-b)e_3^*)(\frac{1}{r}(1,a,r-a))=b_L-r_L$, we get $D_X-(r-b)D_Z=W^L+(b_L-r_L)W^R-E_1$, as $E_1$
is counted twice. To finish observe that in the sequence ($\star$) exactly $r_L-b_L$ numbers are greater or equal to $a$ and use Lemma~\ref{l:recursion for Z}.
\end{proof}

\begin{lemma}\label{c:reductor condition}
    The divisors $R_i$ satisfy the following equations for $i=0,\ldots,r-1.$
    \[X_i=D_X+R_i-R_{i+1},\]
    \[Y_i=D_Y+R_i-R_{i+a},\]
    \[Z_i=D_Z+R_i-R_{i-a}.\]
\end{lemma}

\begin{proof}
It is enough to prove the first equality. Set $\widetilde{X_i}=D_X+R_i-R_{i+1}$ and note that $\sum \widetilde{X_i}=rD_X.$ Moreover
the divisors $\widetilde{X_i}$ satisfy commutativity relations~\eqref{e:comm xz}, hence
$X_i-\widetilde{X_i}$ is constant. Since $\widetilde{X_0}=D_X-R_1=X_0,$ by Lemma~\ref{l:auxiliary for R_i}, the constant is equal to 0.
\end{proof}

\section{The McKay quiver}\label{section:McKay quiver}

By a quiver we mean a finite, directed graph $Q.$ The set of vertices of $Q$
will be denoted by $Q_0$ and the set of arrows by $Q_1.$ For any arrow $a$ in $Q_1$ denote by $\tl(a)$
 the tail of $a$ and by $\hd(a)$ denote the head of $a.$ In what follows, we restrict the general definitions
 of quiver representations to the simple case where the dimension vector is equal to $(1,\ldots,1).$ For any $v\in Q_0$ let $\CC_v$
 denote $1$-dimensional complex vector space assigned to the vertex $v.$
 Representation of the quiver $Q$  is an element of
\[\Rep(Q)=\bigoplus_{a\in Q_1} \Hom_\CC(\CC_{\tl(a)},\CC_{\hd(a)}).\]
By fixing a basis in each $\CC_v$ we can identify $\Rep(Q)$ with
an affine space. With this choice, for any representation $V\in\Rep(Q)$ and $a\in Q_1$ denote by $V(a)$ the constant
representing arrow $a$ in $V.$

A path $q$ in quiver $Q$ is a sequence of arrows $a_l,\ldots, a_2,a_1$ where $\hd(a_i)=\tl(a_{i+1}).$
A linear combination of paths $q_i$ is called an admissible relation, if paths $q_i$ have
the same heads and tails. Any set $R$ of admissible relations
for quiver $Q$ defines an affine subscheme of $\Rep(Q)$ cut by the polynomial
equations coming from $R,$ i.e.
\[\Rep(Q,R):=\{V\in \Rep(Q)\; ; \; V(c)=0 \quad\mbox{for}\ c\in R\},\]
where the function $V(c)$ denotes the linear extension of the function $V(q)=V(a_l)\cdot \ldots \cdot V(a_1)$
defined for a path $q$ consisting of arrows $a_l,\ldots ,a_1.$

Two representations of quiver $Q$ are isomorphic if and only if they lie in the same orbit of the group
$\GL(Q,\CC)=\bigoplus_{v\in Q_0}\CC^*,$
acting on the left on the set $\Rep(Q)$ in the following way:
\[(g\cdot V)(a)=g(\hd(a))V(a)g(\tl(a))^{-1},\quad\text{for\ any}\quad V\in\Rep(Q).\]
This action leaves $\Rep(Q,R)$ invariant. Dividing by the $1$-dimensional subgroup acting trivially we are left with a faithful action of the group
\[\PGL(Q,\CC)=\GL(Q,\CC)/\CC^*(1,\ldots,1).\]

\begin{definition}\label{d:subreps with nonzero arrows}
By a subquiver $Q'\subset Q$ we mean a subset of vertices $Q'_0\subset Q_0$ and a subset of arrows $Q'_1\subset Q_1$
satisfying the following condition: $\tl(a),\hd(a)\in Q'_0$ for any $a\in Q'_1.$ Let $V\in\Rep (Q,R)$ be a representation of $Q.$
A subrepresentation $V'$ of representation $V$ is a representation of a subquiver $Q'\subset Q$
satisfying the following conditions:
$$V'(a)=V(a)\mbox{ for any }a\in Q'_1,$$
$$\mbox{if}\ \ \tl(a)\in Q'_0,\; V'(a)\neq 0\ \ \mbox{then}\ \ \hd(a)\in Q'_0, a\in Q'_1,\mbox{ for any }a\in Q_1.$$
\end{definition}

We do not need the general definition of the McKay quiver, so we quote only the specialization to the case
of a cyclic group action.

\begin{definition}(McKay)\label{definition:McKay quiver}
Let $G$ be a cyclic group $G\subset\GL(3,\CC)$ of order $r,$ such that
the quotient singularity $\CC^3 / G$ is of type
$\frac{1}{r}(1,a,r-a).$ Define the McKay quiver for group $G$ as a finite graph
with $r$ vertices $0,1,\ldots,r-1$  and $3r$ arrows $x_0,y_0,z_0,\ldots,x_{r-1},y_{r-1},z_{r-1}$ such
that $\tl(x_i)=\tl(y_i)=\tl(z_i)=i$ and
\[\hd(x_i)=\modulo{i+1}{r},\quad
\hd(y_i)=\modulo{i+a}{r},\quad
\hd(z_i)=\modulo{i-a}{r}.\]
\end{definition}

The vertices of the McKay quiver correspond to the characters of $G.$

\begin{figure}[h] \centering \includegraphics[width=0.5\textwidth]{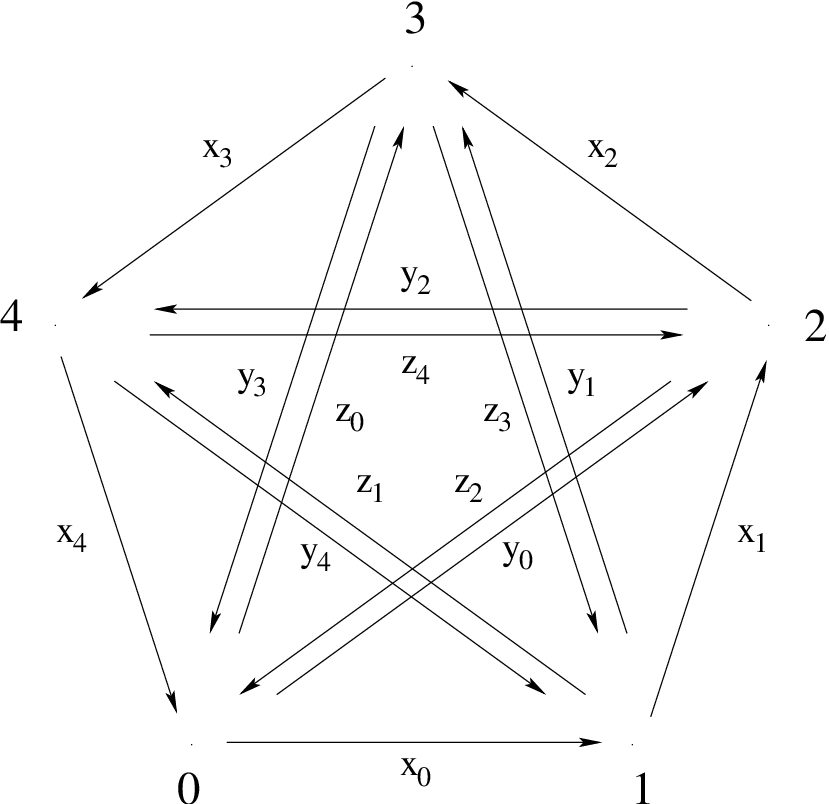}
\caption{The McKay quiver for $r=5,a=2.$} \label{figure:McKay quiver}
\end{figure}

\begin{definition}\label{definition:rep of McKay quiver}
A representation of the McKay quiver is an element of \\
$\Rep(Q,R),$ where $Q$ is the McKay quiver (for fixed $r,a$) and $R$ is the set relations
\[R=\{y_{i+1}x_i-x_{i+a}y_i,z_{i+1}x_i-x_{i-a}z_i,y_{i-a}z_i-z_{i+a}y_i\;
; \; i=0,\ldots,r-1 \},\]
where all indices are meant modulo $r.$
\end{definition}

\section{Family of representations of the McKay quiver}\label{section:construction of family}

In this section we will define a family of the McKay quiver
representations over the Danilov resolution using line bundles determined by the effective divisors $X_i, Y_i, Z_i.$

\begin{definition}[King, Logvinenko]\label{d:KingLogv}
Fix coprime $r,a$ and let $Y(r,a)$ be the Danilov resolution. By a
family of McKay quiver representations on $Y(r,a)$ for the action of type $\frac{1}{r}(1,a,r-a)$ we mean a
set of $r$ $\QQ$-divisors $R_i,$ for $i=0,\ldots,r-1,$ on $Y(r,a)$, such that the $\QQ$-divisors
 \[X_i=D_X+R_i-R_{i+1},\]
 \[Y_i=D_Y+R_i-R_{i+a},\]
 \[Z_i=D_Z+R_i-R_{i-a}.\]
 are effective divisors for $i=0,\ldots,r-1$ (where $D_X, D_Y, D_Z$ are as in Definition~\ref{d:d_x}).
\end{definition}

The above condition is called {\it the reductor condition} in~\cite{Logvinenko:RIMS}. It ensures that the family
of quiver representations on the resolution $Y(r,a)$ is {\it natural}, that is the support of a representation (seen
as a finite dimensional $\CC[x,y,z]$-module) parameterized by a point $p$
in the resolution  coincides with the $G$-orbit parameterized by the image of $p$ in the quotient space, cf.~\cite[Definition~1.4]{Logvinenko:RIMS}.

\begin{definition}\label{d:family F}
For fixed $a$ and $r$ denote by $\FFF(r,a)$ (or $\FFF$ for short) the family given by the $\QQ$-divisors $R_i$ from Definition~\ref{d:r_i}.
\end{definition}

\begin{remark}Note that the divisors $R_i$ satisfy {\it the reductor condition}  by Lemma~\ref{c:reductor condition} and the divisors $X_i,Y_i,Z_i$ are effective by Lemma~\ref{l:X_i eff}.\end{remark}

One could define the family $\FFF(r,a)$ by representing the arrows of the McKay quiver by the sections from Definition~\ref{d:KingLogv}. Equivalently,
define line bundles $L_i=\OO_Y(-R_i)$ on $Y$ for $0\le i \le r-1.$ Then, for each $0 \le i \le r-1$, multiplication by the sections defining the
divisors $X_i, Y_i$ and $Z_i$, respectively, determines morphisms from $L_i$ to $L_{i+1}$, to $L_{i+a}$ and to $L_{i-a}$, respectively.

We will show later that there exist stability conditions $\theta$ such that every representation in the family $\FFF(r,a)$ is $\theta$-(semi)stable.
In fact, it will turn out that such stability conditions $\theta$ are exactly those for which the representations parameterized by $T$-fixed point of the cones $\sigma_0,\ldots,\sigma_{r-1}$ are
simultaneously $\theta$-(semi)stable.

\begin{definition}
For fixed family $\FFF$ and for any $3$-dimensional cone $\sigma$ in the fan of the Danilov resolution, we call an arrow in the McKay
quiver $\sigma$-distinguished if the corresponding divisor (that is $X_i, Y_i$ or $Z_i$)
does not contain the $T$-fixed point of toric chart $U_\sigma.$
\end{definition}

Observe, for example, that no $z_i$-arrow is $\sigma_{r-1}$-distinguished since the divisors $Z_i-E_3$ are
effective for any $i.$

\begin{lemma}\label{l:connectedness}
For any $3$-dimensional cone $\sigma$ in the fan of the Danilov resolution any two vertices of the McKay quiver can be connected by an undirected path of $\sigma$-distinguished arrows different from $x_{r-1}.$
\end{lemma}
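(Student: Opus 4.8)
The plan is to prove the statement by induction on $r$, following the recursive structure of the Danilov resolution. The base case is $a\in\{1,r-1\}$, where the resolution is a chain of blow-ups (Corollary~\ref{c:primitive resolutions}) and the divisors $X_i$ coincide with either $Y_i$ or $Z_i$; here a direct inspection of which $x_i,y_i,z_i$ vanish at each $T$-fixed point shows that every cone has enough distinguished arrows to connect all vertices, and one checks by hand that the arrow $x_{r-1}$ is never needed. For the inductive step, fix a $3$-dimensional cone $\sigma$ in the fan; by definition it is either an $L$-cone, contained in $\langle e_1,e_2,p_{r-a}\rangle$, or an $R$-cone, contained in $\langle e_1,e_3,p_{r-a}\rangle$. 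I will treat the $L$-case; the $R$-case is symmetric.

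First I would use Lemma~\ref{l:recursion for X} and Lemma~\ref{l:recursion for Z} to compare vanishing of sections on $Y$ at the $T$-fixed point of $U_\sigma$ with vanishing of the corresponding sections on the $L$-resolution: since $\sigma$ is an $L$-cone and $X_i|_L = X^L_{\modulo{i}{r-a}}$, $Z_{i+s(r-a)}|_L = Z^L_i$ for an $L$-brick, an arrow $x_i$ (resp. $z_i$) is $\sigma$-distinguished on $Y$ exactly when the corresponding arrow on the $L$-resolution is distinguished for the image cone. By the inductive hypothesis applied to the $\frac{1}{r-a}(1,\modulo{r}{r-a},\modulo{-r}{r-a})$ singularity, any two vertices of the order-$(r-a)$ McKay quiver are joined by an undirected path of distinguished arrows avoiding $x^L_{r-a-1}$. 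Pulling this path back along the brick decomposition $\ZZ/r\ZZ\to\ZZ/(r-a)\ZZ$ — whose fibres are exactly the $L$-bricks — connects, inside the quiver on $Y$, the ``top'' representative of each $L$-brick to the top representatives of the others, using only $\sigma$-distinguished arrows and not $x_{r-1}$ (the lift of $x^L_{r-a-1}$ is exactly the arrow that would be $x_{r-1}$, which is why the inductive hypothesis is phrased to avoid it).

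It then remains to connect, within a single $L$-brick $i, i+(r-a), \ldots, i+s(r-a)$, all of its vertices to one another by $\sigma$-distinguished arrows. The natural candidates are the arrows $y_{i+k(r-a)}\colon i+k(r-a)\to i+(k+1)(r-a)$ for $k=0,\ldots,s-1$, together with the $z$-arrows running the other way; here I would argue that at the $T$-fixed point of an $L$-cone $\sigma$ the relevant $y$-sections do not vanish, using that $Y_{i-a}=\sum_{k=0}^{\tau(r,a,i)}D_k$ and the description of which $D_k$ pass through that fixed point. The one configuration needing care is the brick whose last element is $r-1$ (equivalently, the one whose arrows include $x_{r-1}$ if one tried to use $x$-arrows): there one must check that the $y$- and $z$-arrows still suffice, which they do because $\tau(r,a,0)=r-1$ and the divisor $Z_i-E_3$ is effective for all $i$, forcing appropriate non-vanishing on the remaining cones.

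\textbf{Main obstacle.} The delicate point is bookkeeping the vanishing of sections at the $T$-fixed point of a given $3$-dimensional cone purely in terms of the combinatorics of $\tau$ and the bricks, and in particular verifying that the lift of the forbidden arrow $x^L_{r-a-1}$ (resp. $x^R_{a-1}$) on the sub-resolution is precisely $x_{r-1}$ on $Y$, so that the induction hypothesis — stated with $x_{r-1}$ excluded — can be applied cleanly and re-closes. I expect the intra-brick connectivity to be routine once the non-vanishing of the $y_i$ sections on $L$-cones (and $z_i$ sections on $R$-cones) is established from Definition~\ref{d:divisors}, but the interface between the two levels of the recursion, and the special brick containing $r-1$, are where the real work lies.
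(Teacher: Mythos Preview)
Your inductive skeleton is the same as the paper's, but there are two concrete gaps.

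\emph{First, the intra-brick arrows are misidentified.} By Definition~\ref{definition:McKay quiver} we have $hd(y_j)=j+a$ and $hd(z_j)=j-a\equiv j+(r-a)$, so the arrow that moves forward along an $L$-brick $i,i+(r-a),\ldots$ is $z_j$, not $y_j$; your displayed ``$y_{i+k(r-a)}\colon i+k(r-a)\to i+(k+1)(r-a)$'' is simply the wrong head. More seriously, the non-vanishing claim you propose for the $y$-sections on $L$-cones is false: every $Y_i$ contains $D_0=E_2$ in its support, so on any $L$-cone having $p_0$ as a generator (for instance $\sigma_0$) \emph{all} $y$-arrows fail to be distinguished. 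The paper instead uses $z$-arrows within $L$-bricks on $L$-cones (and, symmetrically, $y$-arrows within $R$-bricks on $R$-cones). The verification is short: if $z_j$ is an intra-$L$-brick arrow then $j+(r-a)<r$, i.e.\ $j<a$, whence $\tau(r,a,j)=(r-a)+\tau(a,\modulo{-r}{a},j)\ge r-a$ and $Z_j$ is supported on $D_{r-a+1},\ldots,D_r$, none of which meets the fixed point of an $L$-cone.

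\emph{Second, your case split is incomplete.} You assert that every $3$-dimensional cone is an $L$-cone or an $R$-cone, but the smooth cone $\sigma=\langle p_0,p_{r-a},p_r\rangle=\langle e_2,p_{r-a},e_3\rangle$ produced by the first weighted blow-up is contained in neither $\langle e_1,e_2,p_{r-a}\rangle$ nor $\langle e_1,e_3,p_{r-a}\rangle$, so it escapes your recursion entirely. The paper handles this cone separately and directly: since $E_2$ and $E_3$ both pass through its $T$-fixed point, every $Y_i$ and every $Z_i$ vanishes there, so the only $\sigma$-distinguished arrows are the $x_i$; one then checks that $x_0,\ldots,x_{r-2}$ are distinguished and already form an undirected path through all vertices. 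This is also the real reason the lemma is phrased to exclude $x_{r-1}$, rather than the brick containing $r-1$ that you flag.

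With these two corrections---swap $y\leftrightarrow z$ in the intra-brick step and add the single extra cone---your argument becomes essentially the paper's.
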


\begin{proof}
The lemma is true for $a\in\{1,r-1\}.$ Note that any two vertices of the McKay quiver
lying in the same $L$-brick can be joined by a sequence of $z$-arrows for any $L$-cone $\sigma,$ and any two vertices lying in the same $R$-brick can be joined by a sequence of $y$-arrows if $\sigma$ is an $R$-cone.
By the inductive step, any two bricks can be joined by a sequence of $\sigma$-distinguished arrows.
To finish, it is enough to consider the cone $\sigma=\langle p_0,p_{r-a},p_r\rangle$ and observe that the only $\sigma$-distinguished arrows are $x_0,\ldots,x_{r-2}.$
\end{proof}

\begin{lemma}
Let $p,p'\in Y$ be two points in the Danilov resolution, belonging to two distinct toric charts isomorphic
to $\CC^3.$ Then the representations parameterized by $p$ and $p'$ in $\FFF$ are not isomorphic.
\end{lemma}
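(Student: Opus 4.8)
The plan is to distinguish the two representations by examining which arrows in the McKay quiver act invertibly, i.e. by the pattern of $\sigma$-distinguished arrows. Each toric chart isomorphic to $\CC^3$ corresponds to a maximal cone $\sigma$ in the fan of the Danilov resolution, and in the $T$-fixed point of $U_\sigma$ a section $x_i,y_i,z_i$ of the family $\FFF$ either vanishes or does not vanish; if it does not vanish at the fixed point it does not vanish on all of $U_\sigma$, so the corresponding linear map of the representation $\FFF_s$ is an isomorphism exactly when its section is $\sigma$-distinguished. Thus to each point $s$ lying in a chart $U_\sigma\cong\CC^3$ we may attach the subset of $\sigma$-distinguished arrows, and this subset is an isomorphism invariant of $\FFF_s$: if $\FFF_s\cong\FFF_t$ then an arrow acts invertibly in one iff it does in the other. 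So it suffices to show that distinct maximal cones $\sigma\neq\sigma'$ with $U_\sigma\cong U_{\sigma'}\cong\CC^3$ give different sets of distinguished arrows.

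First I would set up the dictionary between non-vanishing of a section and lattice data: the section corresponding to $x_i$ (resp.\ $y_i$, $z_i$) is the canonical section of $\OO(X_i)$ (resp.\ $\OO(Y_i)$, $\OO(Z_i)$), and $X_i=D_X+R_i-R_{i+1}$, etc., by Corollary~\eqref{c:reductor condition}. Non-vanishing of this section at the $T$-fixed point of $U_\sigma$ is equivalent to the corresponding torus character having a monomial with no poles and no zero along any ray of $\sigma$ — concretely, to the divisor $X_i$ (resp.\ $Y_i$, $Z_i$) having multiplicity zero along each of the three rays of $\sigma$. Since $X_i,Y_i,Z_i$ are expressed in terms of the $R_j$ and the coordinate divisors $D_X,D_Y,D_Z$, the set of $\sigma$-distinguished arrows at $\sigma$ is entirely determined by the values $e_1^*(v),e_2^*(v),e_3^*(v)$ for the primitive generators $v$ of the rays of $\sigma$, and, via Lemma~\eqref{l:connectedness}, this set is large enough (connects all vertices) to pin down combinatorial information about $\sigma$.

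Next I would reduce to the recursive structure. Every maximal cone of the Danilov resolution is either one of the $\sigma_i=\langle p_i,p_{i+1},e_1\rangle$, or lies strictly inside an $L$-cone $\langle e_1,e_2,p_{r-a}\rangle$ or an $R$-cone $\langle e_1,e_3,p_{r-a}\rangle$, in which case it is the image under $L(r,a)$ or $R(r,a)$ of a maximal cone of a smaller Danilov resolution. Using Lemmas~\eqref{l:recursion for Z} and \eqref{l:recursion for X}, the restrictions of $X_i,Y_i,Z_i$ to an $L$- or $R$-resolution coincide (up to reindexing through $L$- and $R$-bricks) with the divisors $X^L_j,Y^L_j,Z^L_j$ or $X^R_j,Y^R_j,Z^R_j$. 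Hence the set of distinguished arrows at a cone inside the $L$-resolution is the pullback along the brick projection $\ZZ/r\ZZ\to\ZZ/(r-a)\ZZ$ of the set of distinguished arrows for the corresponding cone of the smaller resolution, together with extra arrows among the bricks. This lets me argue by induction on $r$: for $a\in\{1,r-1\}$ one checks directly from Corollary~\eqref{c:primitive resolutions} that the cones $\sigma_i$ have pairwise distinct distinguished-arrow sets; for $1<a<r-1$, two distinct charts either lie in the $L$- and $R$-resolutions separately (distinguished by which of the $y$- vs $z$-arrows among the bricks appear), or in the same sub-resolution (handled by the inductive hypothesis after noting the brick projection is injective on the relevant data), or one of them is a $\sigma_i$ containing $e_1$ which is then separated from the others by the presence of the arrows $x_j$.

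The main obstacle I anticipate is the bookkeeping at the "seam" — the cones $\sigma_i$ containing $e_1$ that are shared between, or adjacent to, the $L$- and $R$-pieces, and more generally ensuring the brick-projection reindexing in Lemmas~\eqref{l:recursion for Z} and \eqref{l:recursion for X} is compatible with the $R_i$-description used to detect non-vanishing. Concretely, I must verify that the multiplicity of $X_i$ (equivalently of $R_i-R_{i+1}$, shifted by $D_X$) along the ray $p_j$ behaves correctly under restriction, and that no accidental coincidence of distinguished sets occurs across the $L/R$ boundary; this is where Lemma~\eqref{l:auxiliary for R_i} giving $R_1 = D_X - E_1$ and the explicit arithmetic of the sequence~\eqref{eq:sequence for R_i} will be needed. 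Once that compatibility is in hand, the induction closes and the two representations $\FFF_s,\FFF_t$ are non-isomorphic because no quiver automorphism can match an invertible arrow to a non-invertible one.
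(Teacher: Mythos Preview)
Your proposal has both a genuine error and a great deal of unnecessary machinery compared with the paper's three--line argument.

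The error is your claim that ``the corresponding linear map of the representation $\FFF_s$ is an isomorphism \emph{exactly when} its section is $\sigma$-distinguished.'' Only one implication holds. If the section does not vanish at the $T$-fixed point of $U_\sigma$ then, since that point lies in the closure of every orbit, it is nowhere zero on $U_\sigma$; but a section that vanishes at the fixed point (hence is not $\sigma$-distinguished) may perfectly well be nonzero at a general $s\in U_\sigma$. Thus the set of $\sigma$-distinguished arrows is in general a proper subset of the arrows invertible in $\FFF_s$, and it is not the isomorphism invariant you say it is. This is precisely the soft spot in the whole argument (the paper's terse ``Hence'' glosses over the same point), but you have promoted it to an explicit, and false, biconditional on which the rest of your plan rests.

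Independently of that, your route to distinguishing the cones is vastly heavier than needed. You propose to invoke the $R_i$ via Corollary~\ref{c:reductor condition}, Lemma~\ref{l:auxiliary for R_i}, the arithmetic of the sequence~\eqref{eq:sequence for R_i}, Lemmas~\ref{l:recursion for X} and~\ref{l:recursion for Z}, brick projections, and an induction on $r$ to show that distinct maximal cones have distinct distinguished--arrow sets. The paper uses none of this. It simply notes that $Y_{i-a}=D_0+\cdots+D_{\tau(i)}$ and $Z_i=D_{\tau(i)+1}+\cdots+D_r$ are consecutive--interval sums of the $D_k$, and that two distinct $3$-cones of the fan share at most two primitive generators among $\{p_0,\ldots,p_r\}$. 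Up to swapping $s$ and $t$, some ray index of $\sigma_t$ then lies outside the index range of the rays of $\sigma_s$, so one can choose the cut $\tau(i)$ so that the resulting $Y$- or $Z$-divisor avoids every ray of $\sigma_s$ (hence is $\sigma_s$-distinguished) but contains that ray of $\sigma_t$ (hence is not $\sigma_t$-distinguished). That is the entire content of the paper's proof; the $x$-arrows, the $R_i$, the recursion, and the induction play no role here.
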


\begin{proof}
Let $\sigma$ and $\sigma'$ be $3$-dimensional cones in the fan of the Danilov resolution corresponding to charts containing $p$ and $p'.$ There are at most two common primitive generators of the cones $\sigma$ and $\sigma'$ belonging to the set $\{p_0,\ldots,p_r\}.$ This implies that at least one $y$- or $z$-arrow is $\sigma$-distinguished and not $\sigma'$-distinguished. Hence the representations parameterized by $p$ and $p'$ in $\FFF$ are not isomorphic.
\end{proof}

\begin{lemma}
Let $p,p'\in Y$ be distinct points in the Danilov resolution, belonging to a single toric chart, isomorphic to $\CC^3,$ on the Danilov resolution. The representations parameterized by $p$ and $p'$ in $\FFF$ are not isomorphic.
\end{lemma}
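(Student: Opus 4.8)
The plan is to prove the contrapositive: if $\FFF_s\cong\FFF_t$ then $s=t$. Since $Y$ is smooth and every maximal cone $\sigma$ of its fan is simplicial of dimension three, the chart $U_\sigma$ containing both $s$ and $t$ is isomorphic to $\CC^3$, with coordinates $u_1,u_2,u_3$ dual to the primitive generators $v_1,v_2,v_3$ of $\sigma$. Restricting the line bundles $\OO(X_i),\OO(Y_i),\OO(Z_i)$ of $\FFF$ and their toric trivializations to $U_\sigma$, the section attached to an arrow $a$ becomes a monomial $u_1^{c_1(a)}u_2^{c_2(a)}u_3^{c_3(a)}$, where $c_j(a)$ is the order of vanishing along $v_j$ of the divisor ($X_i$, $Y_i$ or $Z_i$) corresponding to $a$. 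Thus $\FFF|_{U_\sigma}$ is the morphism $\varphi_\sigma\colon U_\sigma\too\Rep(Q,R,\delta)$, $\varphi_\sigma(s)(a)=s^{c(a)}$, and an isomorphism $\FFF_s\cong\FFF_t$ is an element $(g_v)_{v\in Q^0}\in\GL(\delta,\CC)=(\CC^*)^r$ with $\varphi_\sigma(t)(a)=\frac{g_{hd(a)}}{g_{tl(a)}}\varphi_\sigma(s)(a)$ for all $a$.

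The first step is to reduce the lemma to the injectivity of $\varphi_\sigma$. If $a$ is $\sigma$-distinguished, its monomial $u^{c(a)}$ is invertible on $U_\sigma\cong\CC^3$, hence a nonzero constant, so $\varphi_\sigma(s)(a)=\varphi_\sigma(t)(a)\neq0$ and $g_{hd(a)}=g_{tl(a)}$. By Lemma~\eqref{l:connectedness} the $\sigma$-distinguished arrows connect all vertices of the McKay quiver by undirected paths, so all $g_v$ coincide; the isomorphism lies in the centre $\CC^*\Id$, and $\varphi_\sigma(s)(a)=\varphi_\sigma(t)(a)$ for every $a$, i.e.\ $\varphi_\sigma(s)=\varphi_\sigma(t)$.

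It then suffices to produce, for each generator $v_j$ of $\sigma$, a divisor among the $X_i,Y_i,Z_i$ whose restriction to $U_\sigma$ is the coordinate divisor $\{u_j=0\}$ with multiplicity one and involves neither of the other two coordinate divisors; indeed the values of the three corresponding arrows on $\varphi_\sigma(s)$ are then $s_1,s_2,s_3$, which determine $s$. I would verify this chart by chart. For $\sigma=\sigma_i=\langle p_i,p_{i+1},e_1\rangle$, with coordinate divisors $D_i,D_{i+1},E_1$: take $X_0=E_1$, and --- since $\tau(r,a,\cdot)$ is a permutation of $\{0,\dots,r-1\}$ --- the unique $j$ with $\tau(r,a,j)=i$, for which $Y_{j-a}=\sum_{k=0}^{i}D_k$ restricts to $D_i$ and $Z_{j}=\sum_{k=i+1}^{r}D_k$ restricts to $D_{i+1}$. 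For the extra smooth cone $c=\langle e_2,e_3,p_{r-a}\rangle$ cut out by the first weighted blow--up, with coordinate divisors $D_0=E_2$, $D_r=E_3$, $D_{r-a}$: any $Y_{j-a}$ with $j\ge a$ restricts to $D_0$ (then $\tau(r,a,j)<r-a$), any $Z_j$ with $j<a$ restricts to $D_r$ (then $\tau(r,a,j)\ge r-a$), and restricting the relation~\eqref{e:comm xz} to $U_c$ shows that along $c$ each $X_i$ involves only $D_{r-a}$, the wrap--around index $i=r-1$ contributing the coefficient one, so $X_{r-1}|_{U_c}=D_{r-a}$. Every remaining maximal cone of $Y$ is carried by $L(r,a)^{-1}$ or $R(r,a)^{-1}$ to a maximal cone of the Danilov resolution of a lower order singularity; by Lemmas~\eqref{l:recursion for X} and~\eqref{l:recursion for Z} (and the analogue for the $Y_i$, proved the same way) the relevant restrictions of the $X_i,Y_i,Z_i$ coincide with restrictions of the $X^L_j,Y^L_j,Z^L_j$, resp.\ $X^R_j,Y^R_j,Z^R_j$, so the claim follows by induction on $r$, the base cases $a\in\{1,r-1\}$ being checked directly from $\tau(r,a,i)=\modulo{ai-1}{r}$.

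The main obstacle is this last, combinatorial, step: checking that every chart sees a full set of coordinate monomials. The subtle parts are assembling the $L/R$-recursion into a clean induction --- in particular formulating and using the $Y_i$-analogue of Lemma~\eqref{l:recursion for Z}, and treating the index $i=r-1$ which Lemma~\eqref{l:recursion for X} omits --- and the computation on the central cone $c=\langle e_2,e_3,p_{r-a}\rangle$, which is neither an $L$- nor an $R$-cone; this is the only place where the implicitly defined divisors $X_i$ (via~\eqref{e:comm xz} and $X_0=E_1$, equivalently via $X_i=D_X+R_i-R_{i+1}$ of Corollary~\eqref{c:reductor condition}) are essential rather than the $Y_i$ and $Z_i$.
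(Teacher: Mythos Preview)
Your argument is essentially the same as the paper's: both proofs use Lemma~\ref{l:connectedness} to show that an isomorphism $\FFF_s\cong\FFF_t$ forces the gauge to be central (equivalently, the paper picks the unique representative in each $\GL(\delta,\CC)$-orbit with all $\sigma$-distinguished arrows equal to~$1$), and then both reduce to the claim that for every maximal cone $\sigma$ one can find indices $i',j',k'$ with $X_{i'},Y_{j'},Z_{k'}$ restricting to the three coordinate divisors on $U_\sigma$, this claim being established by induction on $r$ via Lemmata~\ref{l:recursion for X} and~\ref{l:recursion for Z}. The only noticeable difference is that the paper insists on $i'\neq r-1$ (needed for its normalisation argument), whereas your cleaner ``isomorphism lies in the centre'' formulation lets you use $X_{r-1}$ on the central cone $\langle e_2,e_3,p_{r-a}\rangle$; you also treat that cone explicitly, while the paper's proof is terse on this point.
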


\begin{proof}
Let $\sigma=\langle p_l,p_m,p_n \rangle$ be the $3$-dimensional cone in the fan of Danilov resolution, such that $p,p'\in U_\sigma,$ where
$U_\sigma$ stands for the toric chart given by $\sigma.$
Let $D^\sigma_l,D^\sigma_m,D^\sigma_n$ be restrictions of the divisors $D_l,D_m,D_n$ to the chart $U_\sigma.$
We claim that there exist $i,j,k\in\{0,\ldots,r-1\},i\neq r-1$ such that the restrictions of $X_{i},Y_{j},Z_{k}$ to the chart $U_\sigma$ are equal to $D^\sigma_l,D^\sigma_m,D^\sigma_n,$ respectively. This holds for $a\in\{1,r-1\}$ and can be proven for $a\notin\{1,r-1\}$ using recursion and the Lemmata~\ref{l:recursion for Z},~\ref{l:recursion for X}.
In the orbit of the group $\GL(Q,\CC)$ there exists exactly one representation, such that all
$\sigma$-distinguished arrows are represented by the number $1$ (by Lemma~\ref{l:connectedness}).
Therefore, in this unique element of the orbit, the arrows $x_{i},y_{j},z_{k}$ are represented by toric coordinates on $U_\sigma.$ The points $p,p'$ have at least one toric coordinate different, therefore they parameterize non-isomorphic representations.
\end{proof}

\begin{corollary}\label{corollary:all reps non-iso}
For any two distinct points $p,p'\in Y$ in the Danilov resolution the representations parameterized by $p$ and $p'$ in $\FFF$ are not isomorphic.
\end{corollary}

\begin{definition}\label{d:FFF_i}
Let $\FFF_i$ denote the representation of the McKay quiver parameterized by the unique $T$-fixed point belonging to the toric chart $U_{\sigma_i}$ (cf.~Definition~\ref{d:cones sigma_i})
\end{definition}

 Since the divisors $X_j-E_1$ are effective no $x_j$-arrow is $\sigma_i$-distinguished for any $i.$ Moreover, by definition of permutation $\tau,$ for any $i$ there exists a unique $j,$ such that $y$-arrow and $z$-arrow joining vertices $j,j+a$ are not $\sigma_i$-distinguished. For $i$ and $j$ as above, if $j'\neq j$ then among the $y$- and  $z$-arrows joining $j',j'+a$  exactly one is $\sigma_i$-distinguished (cf.~Example~\ref{e:reps F_i}). Hence the representations $\FFF_i$ are particularly easy to deal with.

\section{Stability of quiver representations}\label{section:stability}

In this section we recall some facts and definitions concerning
$\theta$-stability of quiver representations (see~\cite{King}, note that we restrict to the case of dimension vector equal to $(1,\ldots,1)$). We prove that
that the representations in family $\FFF$ (cf.~Definition~\ref{d:family F})  on the Danilov resolution are simultaneously $\theta$-(semi)stable if and only if the representation $\FFF_0,\ldots,\FFF_{r-1}$ are $\theta$-(semi)stable.

For any quiver $Q$ set
$$\Wt(Q)=\{\theta:{Q_0\too \QQ}\;\; ; \; \sum_{v\in Q_0} \theta(v)=0\}.$$

Given a function $\theta\in\Wt(Q)$ for which $\theta(Q_0)\subset\ZZ$, we obtain a character $\chi_\theta$ of $\PGL(Q,\CC).$
Explicitly, for any such $\theta$ the character is
$$\chi_\theta(g)=\prod_{v\in Q_0} g(v)^{\theta(v)},$$
where $g\in \PGL(Q,\CC).$
Therefore, we will call $\Wt(Q)$ a weight space for $Q.$

\begin{definition}(A. King)\label{definition:def of theta-ss}
For any subrepresentation $V'$ of a representation $V\in \Rep(Q)$ and $\theta\in \Wt(Q)$
set
\[\theta(V')=\sum_{v\in Q'_0} \theta(v) .\]
Representation $V$ is called $\theta$-semistable if for every
proper, non-zero subrepresentation $V'\subset V,$
\[\theta(V') \ge \theta(V)=0.\]
Representation $V$ is called $\ths$ if an analogous condition with
strict inequality holds.
\end{definition}

\begin{definition}
We say that a stability parameter $\theta\in\Wt(Q)$ is generic if every $\theta$-semistable representation
is $\theta$-stable.
\end{definition}

\begin{theorem}[King]
Let $Q$ be a quiver, $R$ a set of admissible relations for $Q$ and let
$\theta\in\Wt(Q).$ A point in $\Rep(Q,R)$ is
$\chi_\theta$-(semi)stable under the action of $\PGL(Q,\CC)$ if
and only if the corresponding representation of $Q$ is
$\theta$-(semi)stable. Denote by $M_\theta(Q,R)$ the GIT quotient of
$\Rep(Q,R)$ by $\PGL(Q,\CC)$ with respect to the
$\chi_\theta$-linearization of the trivial bundle over $\Rep(Q,R).$
That is, $M_\theta(Q,R)$ is a scheme constructed from the
graded ring of semi-invariants, namely:
$$M_\theta(Q,R):=\Rep(Q,R){//}_{\chi_\theta} \PGL(Q,\CC)=\Proj \bigoplus_{k=0}^\infty
\CC[\Rep(Q,R)]^{\chi_\theta^k},$$ where
elements of the set $\CC[\Rep(Q,R)]^{\chi}$ are regular
functions $f$ on the representations space $\Rep(Q,R),$ such
that $f(g\cdot v)=\chi(g) f(v),$ for any $g\in \PGL(Q,\CC)$ and
any $v\in \Rep(Q,R).$

For a generic $\theta$ the variety $M_\theta(Q,R)$ is a
fine moduli space of $\theta$-stable representations.
\end{theorem}

\begin{proof}
See~\cite[Propositions~3.1,\ 5.2,\ 5.3]{King}.
\end{proof}

We need a fact concerning families of $\theta$-(semi)stable quiver representations on affine toric varieties, which
is true in more general setting.

\begin{lemma}\label{l:spread}
Let $U_\sigma$ be an affine toric chart in the Danilov resolution $Y$ containing a unique $T$-fixed point $p_\sigma.$
Let $\FFF$ be a family of McKay quiver representations on $Y$ as in Definition~\ref{d:KingLogv}.
If the representation parameterized by the point $p_\sigma$ is $\theta$-(semi)stable than all representation in $\FFF$ are $\theta$-(semi)stable.
\end{lemma}

\begin{proof}
The $\theta$-(semi)stability is an
open condition and it is invariant under the $T$-action since the divisors $R_i$ are $T$-equivariant. Moreover, the $T$-fixed point lies in the closure of all orbits in $U_\sigma.$
\end{proof}

We need also two simple lemmas.

\begin{lemma}\label{l:obvious}
Let $U_\sigma$ be a toric chart in the Danilov resolution, where $\sigma=\langle p_l,p_m,p_n\rangle$ and $l<m<n.$
If the arrow $y_i$ is $\sigma$-distinguished
then it is $\sigma_j$-distinguished for $j\ge l.$ If the arrow $y_i$ is not $\sigma$-distinguished then it is not $\sigma_j$-distinguished for $j\le l$.
If the arrow $z_i$ is $\sigma$-distinguished
then it is $\sigma_j$-distinguished for $j<n.$ If the arrow $z_i$ is not $\sigma$-distinguished then it is not $\sigma_j$-distinguished for $j\ge n-1$.
 \end{lemma}

\begin{proof}
It follows directly from the Definition~\ref{d:divisors}.
\end{proof}

\begin{lemma}\label{l:L_bricks_are_z-connected}
Let $i,i+a,\ldots,i+sa$ be an $R$-brick. Then for $j=i,i+(r-a),\ldots,i+(s-1)(r-a)$ the $y_j$-arrows are $\sigma$-distinguished for any cone $\sigma$ in $R$-resolution. Let $i,i+(r-a),\ldots,i+s(r-a)$ be an $L$-brick. Then for $j=i+a,\ldots,i+s(r-a)$ the $z_j$-arrows are $\sigma$-distinguished for any cone $\sigma$ in $L$-resolution.
\end{lemma}

\begin{proof}
Let $i,i+(r-a),\ldots,i+s(r-a)$ be an $L$-brick. Then $i+k(r-a)<a$ for $k=0,\ldots,s-1$. Hence, by Definition~\ref{d:tau}, $\tau(r,a,i+k(r-a))\ge r-a.$
This implies that the support of divisors $Z_i,Z_{i+(r-a)},\ldots,Z_{i+s(r-a)}$ is disjoint from any torus-fixed point on $L$-resolution. The other case
follows analogously.
\end{proof}

Using the above tools we can prove that $\theta$-stability of the representations $\FFF_1,\ldots,\FFF_{r-1}$ (see Definition~\ref{d:FFF_i}) controls stability
of the whole family $\FFF.$

\begin{lemma}
Let $\theta$ be a stability parameter such that $\FFF_0,\ldots,\FFF_{r-1}$ are $\theta$-(semi)stable. Then every representation in the family $\FFF$ is $\theta$-(semi)stable.
\end{lemma}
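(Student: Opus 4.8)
The plan is to reduce the claim for an arbitrary $T$-fixed point $s$ to the already-established stability of the distinguished representations $\FFF_0,\ldots,\FFF_{r-1}$. Every $T$-fixed point of $Y$ is the torus-fixed point of some $3$-dimensional cone $\sigma$ in the fan of the Danilov resolution, so it suffices to show that $\FFF_\sigma$ is $\theta$-stable for each such $\sigma$. The cones containing $e_1$ are exactly $\sigma_0,\ldots,\sigma_{r-1}$, and for these the corresponding representation is $\FFF_i$ by definition, so those cases are covered by hypothesis. Thus the only work is for the cones $\sigma$ \emph{not} containing $e_1$, i.e. the $L$-cones and $R$-cones that lie strictly inside $\langle e_1,e_2,p_{r-a}\rangle$ or $\langle e_1,e_3,p_{r-a}\rangle$ but do not have $e_1$ as a vertex. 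Actually every $3$-cone contains either $e_1$, or it lies in the sub-fan of the $L$- or $R$-resolution; so one proceeds by induction on $r$, using the recursion defining the Danilov resolution.

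For the inductive step, fix such a $\sigma$, say an $R$-cone, and let $\FFF_\sigma$ be the associated representation. I would compare $\FFF_\sigma$ with the representation $\FFF^R_{\sigma}$ of the McKay quiver of the $R$-resolution (order $a$, type $\frac{1}{a}(1,\modulo{-r}{a},\modulo{r}{a})$), which by the inductive hypothesis is $\theta^R$-stable for the pulled-back parameter — except that one must first know that the stability of $\FFF^R_0,\ldots,\FFF^R_{a-1}$ follows from that of $\FFF_0,\ldots,\FFF_{r-1}$, which is itself part of what the induction must carry along. The key structural inputs are Lemmata~\eqref{l:recursion for X} and~\eqref{l:recursion for Z}: they tell us precisely how the divisors $X_i,Y_i,Z_i$ restrict to the $R$-resolution, hence which arrows of the big McKay quiver are $\sigma$-distinguished. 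Because the $y$-arrows inside an $R$-brick restrict to the $z$-arrows (resp. $y$-arrows) of the smaller quiver, a subrepresentation of $\FFF_\sigma$ (characterised via Lemma~\eqref{l:subreps with nonzero arrows} by its support being closed under $\sigma$-distinguished arrows) restricts to a subrepresentation of $\FFF^R_\sigma$, and one reads off the weights accordingly. Lemma~\eqref{l:connectedness}, giving connectivity of the $\sigma$-distinguished arrows avoiding $x_{r-1}$, guarantees that $\FFF_\sigma$ is indecomposable and pins down the possible destabilising subsets.

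Concretely: let $V=\FFF_\sigma$ and suppose $V'\subsetneq V$ is a proper nonzero subrepresentation with support $S\subseteq\{0,\ldots,r-1\}$. By Lemma~\eqref{l:subreps with nonzero arrows}, $S$ is a union of $R$-bricks (the $y$- and $z$-arrows linking vertices within a brick are all $\sigma$-distinguished) and is closed under the inter-brick $\sigma$-distinguished arrows; collapsing bricks, $S$ corresponds to a subset $\bar S$ of the vertex set of the $R$-quiver which is the support of a subrepresentation $\bar V'\subsetneq \FFF^R_\sigma$. One then checks that $\theta(V')=\theta^R(\bar V')$, where $\theta^R_j=\sum_{i\in\text{brick }j}\theta_i$ is the induced parameter — this is where the arithmetic of the permutation $\tau$ (Definition~\eqref{d:tau}) and the brick decomposition enter. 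Since $\FFF^R_0,\ldots,\FFF^R_{a-1}$ are $\theta^R$-stable (inductive hypothesis, after verifying the hypothesis transfers — see next paragraph), we get $\theta^R(\bar V')>0$, hence $\theta(V')>0$, so $V$ is $\theta$-stable. The symmetric argument with $L$-bricks and the $L$-resolution handles $L$-cones.

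The main obstacle is the bookkeeping needed to propagate the hypothesis down the recursion, i.e. showing that $\theta$-stability of $\FFF_0,\ldots,\FFF_{r-1}$ forces $\theta^L$-stability of $\FFF^L_0,\ldots,\FFF^L_{r-a-1}$ and $\theta^R$-stability of $\FFF^R_0,\ldots,\FFF^R_{a-1}$; equivalently, the induction should really be run on the statement ``for every $3$-cone $\sigma$ containing $e_1$, $\FFF_\sigma$ is $\theta$-stable $\Rightarrow$ for every $3$-cone $\sigma$, $\FFF_\sigma$ is $\theta$-stable'', but to close the loop one wants the stronger invariant that $\theta$-stability of the ``$e_1$-cones'' at each level is equivalent to $\theta$-stability of the corresponding cones one level up. I expect this equivalence follows by the same brick-restriction computation applied to the representations $\FFF_i$ themselves (using the final Remark of Section~\ref{section:construction of family}, which describes exactly which $y$- and $z$-arrows fail to be $\sigma_i$-distinguished), together with the observation that no $x_j$-arrow is ever $\sigma_i$-distinguished so $x_{r-1}$ never causes trouble. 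The base case $a\in\{1,r-1\}$ is immediate since there each $X_i-E_1$ equals $Y_i-E_2$ or $Z_i-E_3$ and the fan is a chain of blow-ups.
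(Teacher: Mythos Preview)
Your overall strategy—induction on $r$ via the $L$/$R$-recursion, collapsing bricks to pass to the smaller quiver—is close in spirit to the paper's, but there is a genuine gap in the central step. You assert that for an $R$-cone $\sigma$ the support $S$ of any subrepresentation of $\FFF_\sigma$ is a union of full $R$-bricks, justifying this by ``the $y$- and $z$-arrows linking vertices within a brick are all $\sigma$-distinguished''. That parenthetical is false: for an $R$-cone $\sigma$ not containing $e_1$, the $y$-arrows inside an $R$-brick are $\sigma$-distinguished, but the $z$-arrows are \emph{not} (for $k$ with $k,k+a$ in the same $R$-brick one has $k+a\ge a$, so $\tau(r,a,k+a)<r-a$ and hence $Z_{k+a}$ contains every $D_j$ with $j\ge r-a$; its section therefore vanishes at the fixed point of any $R$-cone). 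Consequently $S$ need only be upward-closed within each brick, and in general it is \emph{not} a union of full bricks. For instance, with $r=5,\;a=2$ and $\sigma=\langle p_3,p_4,p_5\rangle$, the proper subrepresentation supports of $\FFF_\sigma$ include $\{3\},\{4\},\{3,4\},\{2,3,4\}$, none of which is a union of the $R$-bricks $\{0,2,4\},\{1,3\}$. So the collapsing-to-$\bar S$ step, and with it the identity $\theta(V')=\theta^R(\bar V')$, does not apply to all subrepresentations.

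The paper circumvents this by proving a stronger, purely combinatorial statement that does not involve any induced parameter $\theta^L,\theta^R$: for every $3$-cone $\sigma$, every subrepresentation support of $\FFF_\sigma$ is already a subrepresentation support of some $\FFF_j$ with $j\in\{0,\ldots,r-1\}$. The proof decomposes $S$ into maximal runs in the $(r-a)$-direction and makes a case split on whether the boundary arrows $z_{i-(r-a)}$ or $y_{i+(s+1)(r-a)}$ are $\sigma$-distinguished. If one of them is, the run is directly seen to be a subrepresentation support of an appropriate $\FFF_j$; only when \emph{both} boundary arrows fail to be distinguished does the run become a concatenation of full bricks, and then (and only then) the recursion to the smaller quiver is invoked. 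Your argument is essentially the second case alone; it is the first case—and the case distinction itself—that is missing.
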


\begin{proof}
By Lemma~\ref{l:spread}, to conclude, it is enough to prove that every representation in $\FFF$, parameterized by a $T$-fixed point is $\theta$-(semi)stable.

Let $p_\sigma\in U_\sigma$ be a $T$-fixed point, where $\sigma=\langle p_l,p_m,p_n\rangle$ is a $3$-dimensional cone in the fan of the Danilov resolution and $l<m<n.$ Let $V$ be a subrepresentation of the representation parameterized by $p_\sigma$ in the family $\FFF$. The proof is by induction on $r.$ We will show that there exists $j\in\{0,\ldots,r-1\}$ and a subrepresentation of $\FFF_j$ supported on the same set of vertices as $V.$

The theorem is trivial to check if $a\in\{1,r-a\}.$ Assume that $r>1$ and the theorem is true for any $r'<r.$
Let $V$ be a subrepresentation as above. By $S(V)\subset\{0,\ldots,r-1\}$ we mean a subset of the vertices of the McKay quiver, supporting $V.$  Consider a sequence $i,i+(r-a),\ldots,i+s(r-a)$ of vertices in the set $S(V),$ such that the vertices $i-(r-a),i+(s+1)(r-a)$ are not in $S(V).$ The set $S(V)$ is a union of such sequences. There is no loss of generality in assuming that $S(V)$ itself is a single sequence. Note that $y_i$-arrow and $z_{i+s(r-a)}$-arrow are not $\sigma$-distinguished ($V$ is a subrepresentation, see Definition~\ref{d:subreps with nonzero arrows}).

Suppose that $z_{i-(r-a)}$-ar\-row is $\sigma$-distinguished or the $y_{i+(s+1)(r-a)}$-ar\-row is $\sigma$-dis\-tin\-guished.  We can assume that $k<r+1,$ otherwise there is nothing to prove. By Lemma~\ref{l:obvious}, the vertices $i,\ldots,i+s(r-a)$ form a subrepresentation of some of the representations $\FFF_{i},\ldots,\FFF_{k-1}.$

Now we turn to the case when both $z_{i-(r-a)}$-ar\-row and $y_{i+(s+1)(r-a)}$-ar\-row are not $\sigma$-distinguished. Assume that $\sigma$ is an  $L$-cone. Since $y_i$-arrow and $z_{i+s(r-a)}$ arrow are not $\sigma$-distinguished, the sequence $i,\ldots,i+s(r-a)$ is concatenated out of some $L$-bricks, by Lemma~\ref{l:L_bricks_are_z-connected}. These $L$-bricks correspond to vertices of the McKay quiver for ${1}/{(r-a)}(1,\modulo{r}{},\modulo{-r}{}).$ Moreover, the vertices corresponding to these $L$-bricks form a subrepresentation of the representation parameterized by $p_\sigma$ in the family $\FFF(r-a,\modulo{r}{})$ on the $L$-resolution.  Now we can use the inductive assumption.
\end{proof}

\section{Stability of the representations $\FFF_0,\ldots,\FFF_{r-1}$}\label{section:some linear algebra}

We proved that every representation in the family $\FFF$ family is $\theta$-(semi)stable if and only if
the representations $\FFF_0,\ldots,\FFF_{r-1}$ are simultaneously $\theta$-(semi)stable. We will show how
 to get such parameters $\theta$ using permutation $\tau.$

\begin{definition}
   Let $\xi(r,a)=\tau(r,a)^{-1}$ denote the inverse of permutation $\tau$ (see Definition~\ref{d:tau}).
\end{definition}

Since no $x_i$-arrow is $\sigma_j$-distinguished any two vertices of $\FFF_j$
can be joined by a sequence of $z$- and $y$-arrows, by Lemma~\ref{l:connectedness}.  Moreover, the arrows $z_{\xi(r,a,j)}$ and $y_{\xi(r,a,j)+(r-a)}$ are not $\sigma_j$-distinguished. Therefore, the quiver
 supporting representation $\FFF_j$ consists of vertices $0,1,\ldots,r-1$ and every two vertices $i,i+(r-a)$ are joined either by $z$-arrow or $y$-arrow (but not both) unless $i=\xi(r,a,j).$

\begin{definition}
Let
\[\overline{V}_j=\{ i\; : \; \text{vertices }i,i+(r-a),\ldots,\xi(r,a,j)\text{ of }Q\text{ form a subrepresentation of }\FFF_j\},\]
\[\overline{W}_j=\{ i\; : \; \text{vertices }\xi(r,a,j)+(r-a),\ldots, i\text{ of }Q\text{ form a subrepresentation of }\FFF_j\}.\]
For any $i\in\overline{V}_j$ let $V_{i,j}$ be the subrepresentation of $\FFF_j$ consisting of vertices $i,i+(r-a),\ldots,\xi(r,a,j).$
For any $i\in\overline{W}_j$ let $V_{i,j}$ be the subrepresentation of $\FFF_j$ consisting of vertices $\xi(r,a,j)+(r-a),\ldots,,i-(r-a), i.$
\end{definition}

Note that $i\in \overline{V}_j$ if and only if the $y_i$-arrow is not $\sigma_j$-distinguished.
Note that $i\in \overline{W}_j$ if and only if the $z_i$-arrow is not $\sigma_j$-distinguished.

\begin{lemma}\label{l:F_and_V,W}
Let $\theta\in\Wt(Q)$ be a fixed stability parameter. The representation $\FFF_j$ is $\theta$-semistable
if and only if $\theta(V_{i,j})\ge 0$ for any $i\in \overline{V}_j$ and $\theta(W_{i,j})\ge 0$ for any $i\in \overline{W}_j$.
It is $\theta$-stable if and only if the above conditions hold with strict inequalities.
\end{lemma}

\begin{proof}
The `only if' direction is obvious. Let $U$ be a subrepresentation of $\FFF_j.$ Without loss of generality assume it is supported on vertices
$i,i+(r-a),\ldots,i+s(r-a).$ Then, by Definition~\ref{d:subreps with nonzero arrows},
$i\in\overline{V}_j$ and $i+s(r-a)\in\overline{W}_j$ and hence $V_{i,j}, W_{i+s(r-a),j}$ are subrepresentations of $\FFF_j.$
The lemma follows since $\theta(U)=\theta(V_{i,j})+\theta(W_{i+s(r-a),j}).$
\end{proof}

\begin{example}\label{e:reps F_i}
The following diagram shows representations $\FFF_0,\ldots,\FFF_4$, respectively, in the case of $\frac{1}{5}(1,2,3)$ (c.f. Example~\ref{e:div_y_and_z}). The solid arrows
stand for arrows represented by a non-zero number i.e. $\sigma_j$-distinguished.
\[\FFF_0:\quad 0\overset{z_o}\too 3\overset{z_3}\too 1\overset{z_1}\too 4\overset{z_4}\too 2\]
\[\FFF_1:\quad 2\overset{y_0}\oot 0\overset{z_0}\too 3\overset{z_3}\too 1\overset{z_1}\too 4\]
\[\FFF_2:\quad 1\overset{z_1}\too 4\overset{y_2}\oot 2\overset{y_0}\oot 0\overset{z_0}\too 3\]
\[\FFF_3:\quad 4\overset{y_2}\oot 2\overset{y_0}\oot 0\overset{z_0}\too 3\overset{y_1}\oot 1\]
\[\FFF_4:\quad 3\overset{y_1}\oot 1\overset{y_4}\oot 4\overset{y_2}\oot 2\overset{y_0}\oot 0\]

Moreover $\xi(5,2,0)=2,\ \xi(5,2,1)=4,\ \xi(5,2,2)=3,\ \xi(5,2,3)=1,\  \xi(5,2,4)=0$
and for example $\overline{V}_2=\{1,3,4\},\ \overline{W}_2=\{2,3,4\}.$ If $U$ is a subrepresentation of $\FFF_2$
consisting of vertices $2,4$ then $\theta(U)=\theta(V_{4,2})+\theta(W_{2,2}).$

\end{example}

\begin{definition}
Let $\varphi(r,a,j)=\modulo{\xi(r,a,j)+(r-a)}{r}$ for any coprime $a,r$ and $j=0\ldots,r-1.$
\end{definition}

Note that since $\xi(r,a)$ is a permutation of the set $\{0,\ldots,r-1\}$ so is $\varphi(r,a).$ Let $n_0,\ldots,n_{r-1}$ be some rational numbers. The addition in the indices of $n_i's$ is modulo $r$. Observe that there is a linear map $\QQ^r\ni(n_0,\ldots,n_{r-1})\too\Wt(Q)$ of rank $r-1$ sending $(n_0,\ldots,n_{r-1})$ to the function $\theta\in\Wt(Q)$ such that $\theta(i)=n_i-n_{i+(r-a)}$ for $i=0,\ldots,r-1$ with kernel spanned by the vector $(1,\ldots,1)$.

\begin{lemma}\label{l:common stab}
For any rational numbers $n_0,\ldots,n_{r-1}$ set $\theta(i)=n_i-n_{i+(r-a)}$.
The representations $\FFF_0,\ldots,\FFF_{r-1}$ are simultaneously $\theta$-stable if and only if
\[n_{\varphi(r,a,0)}< n_{\varphi(r,a,1)}<\ldots < n_{\varphi(r,a,r-1)}.\]
\end{lemma}

\begin{proof}
Fix $j$ and $i\neq\xi(r,a,j)+(r-a).$ Then either the $z_{i-(r-a)}$- or the $y_i$-arrow is $\sigma_j$-distinguished. In the first case, $i\in\overline{V}_j$  and $\theta(V_{i,j})=n_i-n_{\xi(r,a,j)+(r-a)}> 0.$ Otherwise  $i-(r-a)\in\overline{W}_j$ and $\theta(W_{i-(r-a),j})=-n_i+n_{\xi(r,a,j)+(r-a)}> 0.$ By the definition of the permutation $\tau$ and by the definition of the divisors $Y_i,Z_i,$ exactly $r-1-j$ of $z$-arrows are $\sigma_j$-distinguished. Moreover, if $y_{j'}$-arrow
is $\sigma_{j'}$-distinguished then it is $\sigma_j$-distinguished for any $j\ge j'$, c.f. Lemma~\ref{l:obvious}. To prove the `only if' direction note that if $\FFF_0$ is $\theta$-stable then $n_{\varphi(r,a,0)}$ is the smallest of $n_i's.$ If in addition $\FFF_1$ is $\theta$-stable then $n_{\varphi(r,a,1)}$ is the second smallest, and so on. For the other direction note that for any $j=0,\ldots,r-1$ we have $\theta(V_{i,j})>0$ for any $i\in\overline{V}_j$ and $\theta(W_{i,j})>0$ for any $i\in\overline{W}_j.$
By Lemma~\ref{l:F_and_V,W} the representation $\FFF_j$ is $\theta$-stable for $j=0,\ldots,r-1.$
\end{proof}

Observe that for any $r,a$ coprime the set of conditions for which the representations $\FFF_0,\ldots,\FFF_{r-1}$ are simultaneously $\theta$-stable
is a simplicial cone (in particular it is non empty) since it is in bijection with the cone given by the conditions $n_{\varphi(r,a,0)}=0,\ 0< n_{\varphi(r,a,1)}<\ldots < n_{\varphi(r,a,r-1)}$.

\section{Main theorem}\label{section:main theorem}

We have defined a family of pairwise non-isomorphic representations of the McKay quiver on the Danilov
resolution $Y,$ which are $\theta$-stable with respect to stability parameters $\theta$ determined in Lemma~\ref{l:common stab}. The universal property of the moduli space $M_\theta(Q,R)$ will ensure that the Danilov resolution dominates one of its components.

\begin{definition}[Craw, Maclagan, Thomas]
Denote by $Y_\theta$ (for generic $\theta\in\Wt(Q)$) the unique irreducible component of the moduli
$M_\theta(Q,R),$ containing representations of the McKay quiver with all
arrows represented by a non-zero number.
Following~\cite[Theorem~4.3]{CrawMaclaganI}, we call $Y_\theta$ the
coherent component of $M_\theta(Q,R).$
\end{definition}

Note that representations of the McKay quiver with all arrows
represented by a non-zero number are $\theta$-stable under any
stability condition $\theta\in\Wt(Q).$ The coherent component is
reduced, irreducible, not-necessarily-normal toric variety of
dimension $3,$ projective over $X=\CC^3 /G$
(see~\cite[Theorem~4.3]{CrawMaclaganI}). Denote by $\pi_\theta$ the
corresponding projective, birational morphism:
\[\pi_\theta:Y_\theta\too X.\]  Denote by $\pi$ the natural toric morphism given by a sequence of toric
weighted blowups \[\pi:Y\too X,\] where $Y$ denotes the Danilov resolution.

Assume that the generic stability parameter $\theta$ is chosen such that all representations of the McKay
quiver in the family $\FFF$ on $Y$ are $\theta$-stable. Since $\theta$ is generic, by King~\cite[Proposition~5.3]{King}, there exists a universal family
of McKay quiver representations over $M_\theta(Q,R)$ and there exists a unique morphisms $\rho$
\[\rho:Y\too  M_\theta(Q,R).\]

Since the family $\FFF$ is defined by Logvinenko's \textit{reductor condition}, hence by \cite[Theorem~4.1, Definition~1.4]{Logvinenko:RIMS}
we see that that $\pi_\theta\circ\rho=\pi.$
\begin{theorem}[Main Theorem]\label{t:main theorem}
For any coprime natural numbers $a,r$ and any  rational
numbers $n_0,\ldots,n_{r-1}$ such that
\[n_{\varphi(r,a,0)}<\ldots<n_{\varphi(r,a,r-1)},\]
where $\varphi(r,a,j)=\modulo{\xi(r,a,j)-a}{r}$ and $\xi(r,a,\cdot)$ is an inverse of the permutation $\tau(r,a,\cdot)$ (see Definition~\ref{d:tau}), the Danilov resolution of the singularity of type $\frac{1}{r}(1,a,r-a)$ is normalization of the irreducible
component of the fine moduli space of $\theta$-stable
representations of the McKay quiver containing representations
corresponding to free orbits, for generic $\theta\in\Wt(Q),$ given by the condition
\[\theta(i)=n_i-n_{i+(r-a)}.\]
\end{theorem}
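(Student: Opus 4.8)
The plan is to assemble the Main Theorem from the machinery already built in the preceding sections, with the new input being only the passage from semistability to stability and the identification of the image with the coherent component up to normalization. First I would fix a stability parameter $\theta$ of the form $\theta_i=n_i-n_{i+(r-a)}$ with the strict chain $n_{\varphi(r,a,0)}<\dots<n_{\varphi(r,a,r-1)}$. By Lemma~\eqref{l:common stab} (applied with strict inequalities, the strict-inequality case being noted there), the representations $\FFF_0,\dots,\FFF_{r-1}$ are simultaneously $\theta$-stable; by Corollary~\eqref{c:stability of top cones} (and the $T$-equivariance lemma preceding it) every representation in the family $\FFF$ on $Y$ is then $\theta$-stable. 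Hence $\FFF$ is a family of $\theta$-stable McKay quiver representations on $Y$ with indivisible dimension vector $\delta\equiv 1$, and King's theorem gives a fine moduli space $M_\theta^s(Q)$ carrying a universal family; the universal property produces the classifying morphism $\rho\colon Y\to M_\theta^s(Q)\subset M_\theta(Q)$ already named in the text.

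Next I would pin down the image of $\rho$. Since the $T$-fixed point representation of the chart $\langle p_0,p_{r-a},p_r\rangle$ (or any chart isomorphic to $\CC^3$ meeting the free locus) has all arrows in some spanning undirected set nonzero, and in particular a generic point of $Y$ maps to a representation with all arrows nonzero, $\rho(Y)$ meets the coherent component $Y_\theta$; as $Y$ is irreducible and $3$-dimensional and $\pi_\theta\circ\rho=\pi$ (this is exactly the consequence of Logvinenko's reductor condition for the $R_i$, recorded just before the statement, via Corollary~\eqref{c:reductor condition}), $\rho$ is a projective birational morphism onto $Y_\theta$. Corollary~\eqref{corollary:all reps non-iso} says $\rho$ is injective on points, and because $\delta$ is indivisible and all the $\FFF_s$ are $\theta$-stable, $\rho$ is also injective on tangent spaces where $Y_\theta$ is smooth; combined with $Y$ smooth and $Y_\theta$ being a (possibly non-normal) toric variety, a finite birational bijective morphism from a normal variety to $Y_\theta$ is precisely the normalization. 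Thus $\rho\colon Y\to Y_\theta$ realizes $Y$ as the normalization of the coherent component, which is what the theorem asserts (noting that $\varphi$ as defined here via $\xi(r,a,j)-a$ agrees modulo $r$ with the $\xi(r,a,j)+(r-a)$ used in Lemma~\eqref{l:common stab}).

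I expect the genuine obstacle to be the step identifying $\rho$ with the normalization morphism rather than just a birational bijection. A priori a bijective birational morphism to a non-normal variety need not be the normalization in full generality; one must use that $Y_\theta$ is a toric variety, that $\rho$ is torus-equivariant, and that $\rho$ restricted to each affine toric chart of $Y$ is finite onto an affine chart of $Y_\theta$ — i.e.\ that the monomials cut out by the $T$-equivariant sections $x_{i'},y_{j'},z_{k'}$ (chosen as in the proof of the ``distinct points in a single chart'' lemma) generate a subring of the chart's coordinate ring with the same fraction field and integral over it. Establishing this finiteness/integrality chart by chart, using Lemmata~\eqref{l:recursion for X} and \eqref{l:recursion for Z} to reduce to the $L$- and $R$-resolutions, is the technical heart; the rest is bookkeeping with the already-proven structural lemmas. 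Finally I would carry out the explicit computation for $\frac{1}{5}(1,2,3)$: list $\tau(5,2,\cdot)$, invert to get $\xi$, form $\varphi(5,2,j)=\modulo{\xi(5,2,j)-2}{5}$, read off the chain of inequalities on the $n_i$, and translate into the cone of admissible $\theta=(\theta_0,\dots,\theta_4)$ with $\theta_i=n_i-n_{i+3}$.
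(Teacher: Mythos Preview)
Your proposal is essentially the paper's own argument and is correct; the only discrepancy is that you manufacture an obstacle where there is none. You already have that $\rho$ is projective (hence proper) from $\pi_\theta\circ\rho=\pi$ with $\pi$ proper and $\pi_\theta$ separated, and you have that $\rho$ is injective on closed points by Corollary~\ref{corollary:all reps non-iso}. Proper plus quasi-finite is finite (Zariski's Main Theorem, or \cite[Exercise~III.11.2]{Hartshorne:geometry}), so $\rho$ is automatically a finite birational surjection from the smooth variety $Y$ onto $Y_\theta$; the universal property of normalization then forces $\rho$ to be the normalization map. No chart-by-chart integrality check, no torus-equivariance, and no tangent-space injectivity are needed. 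This is exactly how the paper closes the argument: it cites \cite[Corollary~II.4.8(e)]{Hartshorne:geometry} for properness, \cite[Exercise~II.4.4]{Hartshorne:geometry} for the image being closed of dimension $3$, surjectivity onto the irreducible $3$-dimensional $Y_\theta$, and then injectivity from Corollary~\ref{corollary:all reps non-iso}---and stops.
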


\begin{proof}
By Lemma~\ref{l:common stab}, every representation of the McKay quiver in the family $\FFF(r,a)$
 on the Danilov resolution,
constructed in Section~\ref{section:construction of family}, is $\ths$  for $\theta$ taken as above.
Therefore, there exists a unique morphisms $\rho$
\[\rho:Y\too  M_\theta(Q,R),\]
 and the following diagram commutes:
\[ \xymatrix{
  Y \ar[rr]^{\rho} \ar[dr]_{\pi}
                &  &    Y_\theta \ar[dl]^{\pi_\theta}    \\
                & X                 }\]

\noindent Morphism $\rho$ is proper since morphisms $\pi$ and
$\pi_\theta$ are proper
(see~\cite[Corollary~II.4.8.(e)]{Hartshorne:geometry}, $\pi_\theta$
is projective, hence separated). By
\cite[Exercise~II.4.4]{Hartshorne:geometry} the image of $\rho$ in
$Y_\theta$ is closed and is of dimension $3$ (see
Corollary~\ref{corollary:all reps non-iso}).

By the work of Craw, Maclagan, Thomas~\cite{CrawMaclaganI} the coherent
component $Y_\theta$ is a not-necessarily-normal
toric variety of dimension $3,$  hence $\rho$ is surjective onto
$Y_\theta.$ We are done if $\rho$ is injective, that is if two
representations of the McKay quiver corresponding to two distinct closed
points on the Danilov resolution are non-isomorphic. This is the
content of Corollary~\ref{corollary:all reps non-iso}.
\end{proof}

\begin{remark}
We note that the cone of stability conditions in the statement of Theorem~\ref{t:main theorem}
is top dimensional in the space $\Wt(Q).$
\end{remark}

\begin{definition}
A chamber of stability conditions is a connected component of the set of generic stability conditions (cf.~\cite{DolgachevHu},\cite{Thaddeus}).
\end{definition}

\begin{theorem}\label{th:second}
The closure of the cone defined by the condition of Theorem~\ref{t:main theorem} is a union of closures of chambers of stability conditions for the
action of the group $\PGL(Q,\CC)$ on the space $\Rep(Q,R).$
\end{theorem}

\begin{proof}
By proof of Lemma~\ref{l:common stab}, if some of the inequalities in the condition for $n_i$ is
not strict, then some representation $\FFF_j$ is strictly $\theta$-semistable. Conversely, if all inequalities are
strict, then all representations $\FFF_j,$ for $j=0,\ldots,r-1$ are $\theta$-stable.
\end{proof}

We note that there may be non-generic points on the other components of $\Rep(Q,R)$ which may define walls subdividing
the simplicial cone of stability conditions defined in Theorem~\ref{t:main theorem}.

\begin{example}
The permutation $\tau(5,2)$ is a cycle of length $5,$ namely
\[\tau(5,2)=(0,4,1,3,2).\] The sequence \[n_0<n_2<n_1<n_4<n_3\] implies that
the union of closures of chambers is given by the conditions $\theta_0+\ldots+\theta_4=0$ and
\[0\le\theta_2\le\theta_1+\theta_2+\theta_4\le\theta_2+\theta_4\le-\theta_0.\]

\end{example}

Using computer algebra packages, the author have checked that for small values of $a$ and $r$ and stability parameters as in the Main Theorem the moduli space of representation of the McKay quiver is normal. This suggests that the following holds.
\begin{conjecture}
The coherent component is normal in this case, that is, the Danilov resolution
is isomorphic to the coherent component $Y_\theta$ for any $\theta$ from Theorem~\ref{t:main theorem}.
\end{conjecture}

\end{document}